\documentclass{template_arxiv}

\usepackage{amsmath,amsfonts,amssymb,amsthm}
\usepackage{graphicx}
\usepackage{xcolor}
\usepackage{enumitem,hyperref,bm}

\newtheorem{theorem}{Theorem}
\newtheorem{lemma}{Lemma}
\newtheorem{proposition}[lemma]{Proposition}

\newtheorem{conjecture}[lemma]{Conjecture}

\theoremstyle{definition}

\newtheorem{remark}[lemma]{Remark}

\newtheorem{question}{Question}

\newcommand{\ds}{\displaystyle}

\newcommand{\N}{\mathbb{N}}

\newcommand{\R}{\mathbb{R}}
\newcommand{\A}{\mathcal{A}}
\newcommand{\B}{\mathcal{B}}

\newcommand{\aw}{\textcolor{white}{\mathtt{0}}}

\newcommand{\lan}{\mathcal{L}}

\newcommand{\abeq}{\sim_{\text{ab}}}

\newcommand{\colour}{\mathtt{color}}

%
%

\title{New examples of words for which binomial complexities and subword complexity coincide} 

\author[1]{L\'eo Vivion\thanks{lvivion.math@gmail.com}}

\affil[1]{Laboratoire de Math\'ematiques Pures et Appliqu\'ees Joseph Liouville, Universit\'e du Littoral C\^ote d'Opale, UR 2597, F-62100 Calais, France.}

\date{February 2026}

%
%

\begin{document}

\maketitle

\begin{abstract}
The complexity of an infinite word can be measured in several ways, the two most common measures being the subword complexity and the abelian complexity. In 2015, Rigo and Salimov introduced a family of intermediate complexities indexed by $k\in\mathbb{N}_{>0}$: the $k$-binomial complexities. These complexities scale up from the abelian complexity, with which the $1$-binomial complexity coincides, to the subword complexity, to which they converge pointwise as $k$ tends to $+\infty$. In this article, we provide four classes of $d$-ary infinite words---namely, $d$-ary $1$-balanced words, words with subword complexity $n\in\mathbb{N}_{>0}\mapsto n+(d-1)$ (which form a subclass of the so-called quasi-Sturmian words), hypercubic billiard words, and words constructed by repeated Sturmian colorings---for which this scale ``collapses'', that is, all $k$-binomial complexities, for $k\geq 2$, coincide with the subword complexity. This work generalizes a result of Rigo and Salimov, established in their seminal paper from 2015, which asserts that the $k$-binomial complexity of any Sturmian word coincides with its subword complexity whenever $k\geq 2$.

\vspace*{.5cm}
\noindent{\bf Keywords:} Binomial complexities $\bm{\cdot}$ Hypercubic billiard words $\bm{\cdot}$ $d$-ary balanced words $\bm{\cdot}$ Sturmian colored sequences $\bm{\cdot}$ Quasi-Sturmian words
\end{abstract}

%
%

\section{Introduction}

The subword complexity and the abelian complexity are two very classical ways to measure how ``complicated'' an infinite word is. In 2015, Rigo and Salimov introduced a family of complexities forming a scale between the abelian complexity and the subword complexity: the \emph{$k$-binomial complexities} \cite{RS15}. These complexities are parameterized by the integer $k\in\N_{>0}$ and are defined via a family of equivalence relations called \emph{$k$-binomial equivalences}. For $k=1$, this relation corresponds to the \emph{abelian equivalence}, and the $1$-binomial complexity is the same as the \emph{abelian complexity}. The $(k+1)$-binomial equivalence is a refinement of the $k$-binomial one, and as $k$ increases, the $k$-binomial equivalence gets progressively closer to equality between finite words. Consequently, the $k$-binomial complexities form a scale from the abelian complexity to the subword complexity, and the $k$-binomial complexity of a given word converges pointwise to its subword complexity as $k$ tends to $+\infty$.

\medskip

Since 2015, $k$-binomial complexities have been the subject of several papers \cite{LLR20,LRR20,LCWW24,RSW24,GRW24}. Interested readers may also consult \cite{RSW23,RRW25} for variations on $k$-binomial complexities and \cite{LRR20b,Whi21} for recent articles on $k$-binomial equivalence relations.

\medskip

Sturmian words constitute a central class in Combinatorics on Words. Their subword complexity is given by the function $n\in\N\mapsto n+1$ \cite{MH40}, while their abelian complexity is the constant function $n\in\N_{>0}\mapsto 2$ \cite{CH73}. In their seminal paper, Rigo and Salimov proved that Sturmian words satisfy the following remarkable combinatorial property: the $2$-binomial complexity of any Sturmian word is $n\in\N\mapsto n+1$, \emph{i.e.}, it coincides with their subword complexity. Note that, since the $k$-binomial complexities form a scale between the abelian complexity and the subword complexity, if the $k$-binomial complexity of a word coincides with its subword complexity, then this property also holds for all its $l$-binomial complexities with $l\geq k$.

\medskip

This article contributes to the study of words for which, similarly to Sturmian words, the $k$-binomial complexity coincides with the subword complexity for some small integer $k$. Although we focus mainly on the case $k=2$, several of our results also hold for arbitrary integer $k$ (see Lemmas~\ref{lemma:projections_kbin} and \ref{lemma:stability} below).

\medskip

Words whose $1$-binomial complexity (\emph{i.e.}, abelian complexity) coincides with their subword complexity can be easily characterized. Indeed, this condition is so restrictive that very few words satisfy it.

\begin{proposition}[\cite{RSW24} Remark~7.1]\label{prop:1bin}
The $1$-binomial complexity of an infinite word $w$ coincides with its subword complexity if and only if there exist $d$ distinct letters $a_1,\ldots,a_d$ and $(d-1)$ positive integers $k_1,\ldots,k_{d-1}$ such that
\[
    w=a_1^{k_1}a_2^{k_2}\ldots a_{d-1}^{k_{d-1}}a_d^\omega,
\]
where $a_d^\omega$ denotes the constant infinite word $a_da_da_da_d\ldots$.
\end{proposition}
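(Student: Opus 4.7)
The plan is to prove both implications separately.

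For the ``if'' direction (sufficiency), assume $w=a_1^{k_1}\cdots a_{d-1}^{k_{d-1}}a_d^\omega$ and set $S_j=k_1+\cdots+k_j$. I would show that any two factors of $w$ of a common length $n$ sharing a Parikh vector must be equal. The argument splits into cases on the starting positions $i_1<i_2$ of two such factors: if both are $\geq S_{d-1}$, the factors both equal $a_d^n$; if only $i_2$ is, the factor at $i_1$ would need to contain no $a_j$ with $j<d$, contradicting $i_1<S_{d-1}$; and if both are $<S_{d-1}$, the pairwise distinctness of $a_1,\ldots,a_d$ forces both factors to begin inside a single run $a_j^{k_j}$ (otherwise the earlier factor would contain a letter $a_{j_1}$ absent from the later one), and then equality of Parikh vectors either places both factors entirely inside that run (so both equal $a_j^n$) or yields $i_1=i_2$.

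For the ``only if'' direction (necessity), the key observation is the following: \emph{if $w$ contains a factor of the form $aua$ where $a$ is a letter and $u$ is a nonempty word not containing $a$, then the two length-$(|u|+1)$ words $au$ and $ua$ are distinct factors of $w$ with identical Parikh vectors} (one starts with $a$, the other with the first letter of $u$, which is not $a$). Hence under our hypothesis $w$ contains no such $aua$, which means every letter of $w$ occurs in a single maximal block. Writing the run-length decomposition $w=b_1^{\ell_1}b_2^{\ell_2}\cdots$ with $b_i\neq b_{i+1}$, the $b_i$ are therefore pairwise distinct; the finiteness of the alphabet forces this decomposition to terminate in finitely many runs, and because $w$ is infinite the final run must be infinite, yielding the claimed form upon reindexing.

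The genuine difficulty is minor and lies in the case analysis of the sufficiency direction: the boundary cases (factors straddling the interface between the last non-constant run and $a_d^\omega$, factors entirely contained in a single run, factors that begin in different runs) must all be handled, which is precisely what the case split above accomplishes. By contrast, the necessity direction reduces to the short, explicit construction of the abelian collision $(au,ua)$, and everything else is a matter of bookkeeping on the runs of $w$.
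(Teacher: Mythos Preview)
Your argument is correct in both directions. The sufficiency case analysis is sound: once two equal-length factors with the same Parikh vector are forced to start in the same run $a_j^{k_j}$, the number of occurrences of $a_j$ in each factor equals $\min(n,S_j-i)$, and equating these for $i_1<i_2$ indeed yields either $i_1+n,i_2+n\leq S_j$ (both factors are $a_j^n$) or $i_1=i_2$; the mixed case is automatically excluded. For necessity, the abelian collision $(au,ua)$ arising from any factor $aua$ with $a\notin u$ is exactly the right obstruction, and the conclusion that each letter occurs in a single maximal block follows cleanly.

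There is nothing to compare your proof against: the paper does not prove Proposition~\ref{prop:1bin} but merely cites it from \cite[Remark~7.1]{RSW24}. Your write-up therefore supplies what the paper omits, and the argument you give is essentially the natural one.
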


By contrast, characterizing words whose $2$-binomial complexity coincides with their subword complexity is considerably more difficult and remains an open problem. In the literature, apart from the Sturmian case and the case of words whose $1$-binomial complexity coincides with their subword complexity, the only other known example is the Tribonacci word \cite{Rau82}, which is the most famous and widely studied instance of the class of \emph{Arnoux--Rauzy words} \cite{AR91}.

\begin{proposition}[\cite{LRR20} Theorem~29]
The $2$-binomial complexity of the Tribonacci word coincides with its subword complexity.
\end{proposition}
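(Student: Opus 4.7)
The plan is to exploit the substitutive structure of the Tribonacci word $T$, which is the unique right-infinite fixed point of the Tribonacci morphism $\tau\colon a\mapsto ab,\; b\mapsto ac,\; c\mapsto a$. Because $\tau$ is primitive and recognizable on its own language, there exists a threshold $N$ such that every factor $w$ of $T$ with $|w|\geq N$ admits a unique desubstitution
\[
 w = s\,\tau(u)\,p,
\]
with $u$ a factor of $T$ and $(s,p)$ lying in a finite, explicit set of admissible boundary pairs. I would run a strong induction on $|w|$, treating factors of length less than $N$ as a finite base case verifiable by direct computation.

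The first step is algebraic: describe how abelian and $2$-binomial statistics transform under $\tau$. For any letters $x,y$ and any word $u=u_1\cdots u_m$, the standard identity
\[
 \binom{\tau(u)}{xy}=\sum_{i=1}^m \binom{\tau(u_i)}{xy}+\sum_{i<j}|\tau(u_i)|_x\,|\tau(u_j)|_y
\]
expresses the nine $2$-binomial coefficients of $\tau(u)$ as explicit affine combinations of the Parikh vector of $u$ and its $2$-binomial coefficients, plus a quadratic correction in the Parikh entries. I would verify that the resulting transformation $\Phi$, sending the combined abelian-plus-$2$-binomial statistics of $u$ to those of $\tau(u)$, is injective on the locus of statistics actually realized by factors of $T$.

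The inductive step then reads as follows. Assume $w_1,w_2$ are factors of $T$ of length $n\geq N$ that are $2$-binomially equivalent, and write $w_i=s_i\,\tau(u_i)\,p_i$. Because $\tau(a),\tau(b),\tau(c)$ have lengths $2,2,1$, the abelian equivalence of $w_1,w_2$ combined with the finite list of admissible boundaries already imposes $|u_1|=|u_2|$ and stringent constraints on $(s_i,p_i)$. A finite case analysis, using the injectivity of $\Phi$ and the explicit expansion above, is designed to upgrade these constraints to the simultaneous equalities $(s_1,p_1)=(s_2,p_2)$ and the $2$-binomial equivalence of $u_1$ and $u_2$. Since $|u_i|<|w_i|$, the induction hypothesis then yields $u_1=u_2$, hence $w_1=w_2$, showing that every $2$-binomial class of factors of $T$ is a singleton and therefore that the $2$-binomial complexity of $T$ equals its subword complexity $n\mapsto 2n+1$.

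The main obstacle will be the boundary analysis in the inductive step. The three-letter alphabet and the asymmetric image lengths of $\tau$ create many more boundary configurations than in the Sturmian case originally handled by Rigo and Salimov, and several of these are \emph{a priori} compatible with abelian equivalence; discarding them demands a careful exploitation of the additional $2$-binomial information. A secondary difficulty is certifying the base case, where one must check, by hand or by computer, that no two distinct factors of $T$ of length below $N$ carry the same $2$-binomial data.
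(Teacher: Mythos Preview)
The paper does not give its own proof of this proposition: it is quoted as a result of Lejeune, Rigo and Rosenfeld \cite{LRR20}, and the paper explicitly remarks that their proof is \emph{computer-assisted} and that ``obtaining a non-computer-assisted proof of this result remains an open problem''. So there is no in-paper argument to compare your proposal against.

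That said, the strategy you sketch---desubstitute via recognizability, track how the abelian and $2$-binomial data transform under $\tau$, and run a boundary case analysis inside an induction on length---is precisely the template behind the cited computer-assisted proof. The place where your outline stops short of a proof is exactly the place where the original authors resorted to the computer: the claim that the boundary analysis ``upgrades'' abelian constraints to $(s_1,p_1)=(s_2,p_2)$ together with $u_1\sim_2 u_2$ is the whole difficulty, and you have not carried it out. The transformation $\Phi$ you describe is not linear (it has a genuine quadratic correction coming from $\sum_{i<j}|\tau(u_i)|_x|\tau(u_j)|_y$), so its injectivity on the realized locus is delicate; moreover, the unequal image lengths $|\tau(a)|=|\tau(b)|=2$, $|\tau(c)|=1$ mean that $2$-binomially equivalent $w_1,w_2$ need not have desubstitutions with $|u_1|=|u_2|$ before the boundary pairs are pinned down, contrary to what you suggest. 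These interacting ambiguities are what make the ``finite case analysis'' explode, and your proposal gives no mechanism to control it.

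In short: your plan matches the known approach, you have correctly identified where the hard work lies, but what you present is an outline rather than a proof, and completing that outline by hand is, as the paper notes, currently open.
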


It is worth mentioning that the proof provided by Lejeune, Rigo and Rosenfeld is computer-assisted. At the time of writing, obtaining a non-computer-assisted proof of this result remains an open problem.

\medskip

Arnoux--Rauzy words (and, more generally, strict episturmian words \cite{DJP01,GJ09}) form a class of ternary (resp. $d$-ary) infinite words that can be viewed as a combinatorial and arithmetic generalization of Sturmian words. Lejeune, Rigo and Rosenfeld conjectured that all Arnoux--Rauzy words have their $2$-binomial complexity equal to their subword complexity (see \cite{LRR20} and \cite[Chapter~4, Section~4.4 and Appendix~B]{LejPhD}; see also Section~\ref{sec:conclusion} of this article where some numerical experiments are discussed).

\begin{conjecture}[Lejeune, Rigo, Rosenfeld]
The $2$-binomial complexity of any Arnoux--Rauzy word coincides with its subword complexity.
\end{conjecture}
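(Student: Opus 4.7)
The plan is to prove the stronger statement that no two distinct factors $u,v$ of the same length in an Arnoux-Rauzy word $w$ over a $d$-letter alphabet are $2$-binomially equivalent. Since the $2$-binomial complexity is pinched between the abelian and subword complexities, this yields the conjecture. I would induct on $d$, the Sturmian case $d=2$ being the base, and exploit the $S$-adic structure of AR words through the substitutions $\sigma_i\colon i\mapsto i,\ j\mapsto ij\ (j\neq i)$, together with the fact that every sufficiently long factor of $w$ admits a unique $\sigma_{i_1}$-desubstitution into a factor of the tail AR word $w_1$ whose directive sequence is $(i_n)_{n\geq 2}$.

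First, for each pair of letters $\{a,b\}$, I would apply Lemma~\ref{lemma:projections_kbin} to the projection $\pi_{ab}$ that erases every letter outside $\{a,b\}$. The images $\pi_{ab}(u)$ and $\pi_{ab}(v)$ would remain $2$-binomially equivalent inside the binary word $\pi_{ab}(w)$. One would then combine this with the paper's results for binary $1$-balanced and quasi-Sturmian words to force $\pi_{ab}(u)=\pi_{ab}(v)$ for every such pair of letters. Second, given that all $\binom{d}{2}$ binary projections of $u$ and $v$ agree, I would invoke the strict episturmian structure of $w$ — in particular, the fact that all right-special factors are prefixes of a unique right-infinite AR word and that return words have an explicit description — to argue that $u=v$ as $d$-ary words. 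Equivalently, this step may be recast as a descent under $\sigma_{i_1}^{-1}$: desubstitute $u$ and $v$, verify that their desubstitutions remain $2$-binomially equivalent up to an explicit correction depending only on $i_1$, and iterate until the factors are so short that equality becomes automatic.

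The main obstacle I foresee is in the descent step. Writing out $|\sigma_i(x)|_{ab}$ in terms of data attached to $x$ yields a formula that mixes $|x|_{ab}$, $|x|_{ba}$, and the abelian vector of $x$, with coefficients depending on $i$. Closing the induction would require a clean invariant that survives this mixing along the whole directive sequence $(i_n)_n$, an invariant strong enough to cope with the failure of $1$-balancedness when $d\geq 3$. I expect the technical heart of any resolution to lie precisely here; the need for such delicate bookkeeping is presumably why the $d=3$ Tribonacci case treated by Lejeune, Rigo and Rosenfeld in \cite{LRR20} required a computer-assisted argument rather than a structural one, and any non-computer-assisted proof of the general conjecture will likely hinge on identifying the right substitution-invariant refinement of the $2$-binomial profile.
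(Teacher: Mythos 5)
This statement is an open conjecture, not a theorem of the paper: the paper offers no proof, only numerical evidence (factors up to length $99$ of Arnoux--Rauzy words with periodic directive sequences of period at most $5$ are each alone in their $2$-binomial class), and it explicitly records that even the single case of the Tribonacci word is only known via a computer-assisted argument. So there is no ``paper proof'' to compare against, and your proposal is a research sketch rather than a proof --- which you acknowledge by flagging the descent step as the unresolved technical heart.

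More importantly, your first step fails concretely, and the paper itself supplies the counterexample. You propose to apply Lemma~\ref{lemma:projections_kbin}: project $u\sim_2 v$ onto each pair of letters, use the binary results (Proposition~\ref{prop:binary} for $1$-balanced words, or the quasi-Sturmian case) to force $\pi_{a,b}(u)=\pi_{a,b}(v)$, and then reconstruct $u=v$ via Lemma~\ref{lemma:reconstruction}. But Section~\ref{sec:main} shows that for the Tribonacci word \emph{none} of the three binary projections is $1$-balanced (e.g.\ $u=1111$ and $v=3113$ are both factors of $\pi_{1,3}(w_{\texttt{tribo}})$ with $|u|_1-|v|_1=2$), and, worse, none of them has its $2$-binomial complexity equal to its subword complexity: the paper exhibits $u=2112112112112112$ and $v=1212112112112121$ in $\lan_{16}\bigl(\pi_{1,2}(w_{\texttt{tribo}})\bigr)$ with $u\sim_2 v$ but $u\neq v$. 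Hence from $\pi_{a,b}(u)\sim_2\pi_{a,b}(v)$ you cannot conclude $\pi_{a,b}(u)=\pi_{a,b}(v)$, and the hypothesis of Lemma~\ref{lemma:projections_kbin} is simply not satisfied for Arnoux--Rauzy words with $d\geq 3$. This is precisely why the paper presents the Tribonacci word as showing that the converses of Lemmas~\ref{lemma:binary_projections} and~\ref{lemma:projections_kbin} fail, and why the conjecture remains open: any proof must go through the letters-mixing desubstitution analysis you gesture at in your third paragraph, for which no clean invariant is currently known.
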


Although the Tribonacci word is the only known example of an Arnoux--Rauzy word satisfying this property, the computer-assisted proof developed for it could potentially be applied to other purely morphic Arnoux--Rauzy words to determine whether this property holds in those cases as well (see \cite[Section~6]{LRR20}).

\medskip

Our main contributions are summarized in the following theorem, which provides several new and broad classes of words whose $2$-binomial complexity coincides with their subword complexity.

\begin{theorem}\label{th:main}
Let $d\geq 2$.\\
\emph{(i)} If $w$ is a $1$-balanced $d$-ary word, then its $2$-binomial complexity is equal to its subword complexity.\\
\emph{(ii)} If $w$ is a word with subword complexity $n\in\N_{>0}\mapsto n+(d-1)$, then its $2$-binomial complexity is equal to its subword complexity.\\
\emph{(iii)} If $w$ is a hypercubic billiard word in dimension $d$, then its $2$-binomial complexity is equal to its subword complexity.\\
\emph{(iv)} If $w$ is a $d$-ary Sturmian coloring, then its $2$-binomial complexity is equal to its subword complexity. 
\end{theorem}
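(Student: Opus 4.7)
The plan is to treat the four items as instances of a single philosophy: reduce each case to the Sturmian case already handled by Rigo--Salimov, via a projection-and-synchronization argument on binary sub-alphabets. Recall that $u \sim_2 v$ for factors of the same length is equivalent to $u$ and $v$ being abelian equivalent together with $\binom{u}{ab}=\binom{v}{ab}$ for every ordered pair of distinct letters $a,b$; and this last count coincides with the corresponding count on the projection of $u$ and $v$ onto the binary alphabet $\{a,b\}$. The two preliminary lemmas announced in the introduction (Lemmas~\ref{lemma:projections_kbin} and \ref{lemma:stability}) are almost certainly the technical devices that formalize this reduction and its stability under morphisms or codings; I would invoke them as black boxes throughout.

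For item (i), I would rely on the structure of $d$-ary $1$-balanced words: after projection onto any two letters $\{a,b\}$, the resulting binary word is itself $1$-balanced, hence is either Sturmian or ultimately periodic. Two distinct factors of the same length in the ambient word must differ on some such binary projection, and on that projection the factors are distinguished either by the Rigo--Salimov theorem (Sturmian case) or by Proposition~\ref{prop:1bin} (ultimately periodic case), from which I recover $u\not\sim_2 v$ via the projection lemma. Item (iii) follows the same template: the projection of a hypercubic billiard word in dimension $d$ onto two coordinate letters is a $2$-dimensional billiard word, which is a Sturmian (or ultimately periodic) word, so exactly the same reduction closes the argument.

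For item (ii), I would instead exploit the structural theorem on words with subword complexity $n+(d-1)$: by Cassaigne's characterization, such a word is, up to a prefix, the image of a Sturmian word under a morphism of a very specific form. The plan is to prove a preservation lemma asserting that this family of morphisms sends words whose $2$-binomial complexity matches their subword complexity to words with the same property. Concretely, distinct factors of the image lift to distinct aligned factors of the Sturmian pre-image, and one transfers the $2$-binomial distinguishability by tracking how $\binom{\cdot}{ab}$ transforms under the morphism together with the boundary contributions between consecutive letter-images; Lemma~\ref{lemma:stability} should supply exactly this kind of statement.

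The most delicate case is (iv), and this is where I expect the main obstacle. A Sturmian coloring of a Sturmian word combines two independent Sturmian sources: a factor of the resulting word encodes a factor of the skeleton Sturmian together with an aligned factor of the color Sturmian, with the alignment dictated by the letter-frequencies of the skeleton. My approach is to establish a bijection between length-$n$ factors of the colored word and suitable pairs of factors of the two underlying Sturmian words, and to show that $2$-binomial equivalence of two colored factors forces $2$-binomial equivalence on both pieces, which by the Sturmian case forces equality. The main difficulty is the bookkeeping: a shift in the skeleton word does not simply shift the color assignments, so the $\binom{\cdot}{ab}$ counts of the colored word are not linear combinations of those of the Sturmian layers; carrying out this computation cleanly, perhaps through a refined version of Lemma~\ref{lemma:projections_kbin} adapted to colorings, is where the heart of the proof should lie.
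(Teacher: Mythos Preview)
Your overall philosophy---reduce to binary projections and invoke the Sturmian case---is exactly that of the paper, and your treatment of (i) and (iii) matches it. One correction there: in the non-Sturmian (eventually periodic or skew) case you cannot appeal to Proposition~\ref{prop:1bin}, since a periodic $1$-balanced binary word such as $(01)^\omega$ has abelian complexity strictly below its subword complexity. The paper instead proves a single statement (Proposition~\ref{prop:binary}) covering \emph{all} binary $1$-balanced words at once, and then feeds that into the projection lemma.

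Where you go astray is in the assignment of the two key lemmas to (ii) and (iv): you have them swapped. In the paper, (ii) is handled exactly like (i) and (iii), via Lemma~\ref{lemma:binary_projections}. Using the Ferenczi--Mauduit structure theorem (not Cassaigne's) for words of complexity $n+(d-1)$, one checks directly that every binary projection is $1$-balanced, and the projection lemma closes the argument. Your proposed morphism-preservation route for (ii) is dangerous: the paper explicitly exhibits a morphism ($1\mapsto 1221$, $2\mapsto 2112$) whose image of any Sturmian word is quasi-Sturmian yet has $b^2_w\neq p_w$, so a generic ``morphisms preserve $b^2=p$'' lemma is false and you would need to isolate precisely which morphisms work.

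Conversely, (iv) is not the hardest case but the most direct application of Lemma~\ref{lemma:stability}, and the argument is cleaner than you anticipate. Writing $w=\colour(w_0,a,w_1)$, define the ``decoloring'' substitution $\sigma$ sending each letter of $\B$ back to $a$; then $\sigma(w)=w_0$ and $\pi_\B(w)=w_1$. For factors $u,v$ of $w$ with $u\sim_k v$, one shows $\pi_\B(u)\sim_k\pi_\B(v)$ (trivially, as in the projection lemma) and $\sigma(u)\sim_k\sigma(v)$ (via the identity $\binom{\sigma(u)}{x}=\sum_{\sigma(y)=x}\binom{u}{y}$). Since $w_0$ and $w_1$ each have $b^k=p$, both pairs collapse to equalities, and $u$ is determined by $(\sigma(u),\pi_\B(u))$. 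No delicate alignment bookkeeping is needed.
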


These four classes of words are defined in Section~\ref{sec:def}. Interestingly, all these examples can be thought of as generalizations of Sturmian words. At the time of writing, apart from the trivial case described above where the $1$-binomial complexity coincides with the subword complexity, we are not aware of any word genuinely unrelated to Sturmian words and whose $2$-binomial complexity coincides with its subword complexity.

\medskip

The proof of Theorem~\ref{th:main} relies on the two key Lemmas~\ref{lemma:binary_projections} and \ref{lemma:stability} stated in Section~\ref{sec:main}. These lemmas provide sufficient conditions for the $2$-binomial complexity of a word to coincide with its subword complexity, and can be regarded as a first step toward a characterization of words satisfying this property. More precisely:
\begin{itemize}
    \item[-] Statements (i)--(iii) of Theorem~\ref{th:main} follow from Lemma~\ref{lemma:binary_projections}, which states that if all the binary projections of a given word are $1$-balanced, then its $2$-binomial complexity coincides with its subword complexity.
    \item[-] Statement (iv) of Theorem~\ref{th:main} follows from Lemma~\ref{lemma:stability}, which asserts that words whose $k$-binomial complexity equals their subword complexity are stable under a coloring process.
\end{itemize}

Finally, let us mention that for every $k\geq 3$, it is known that there exist words whose $k$-binomial complexity, but not their $(k-1)$-binomial complexity, coincides with their subword complexity. This was proved by Rigo, Stipulanti and Whiteland in \cite{RSW24}, where they showed that the image of any Sturmian word under $(k-2)$ iterations of the Thue--Morse substitution satisfies this property, see \cite[Theorems~7.2 and 7.4]{RSW24}.

\medskip

\noindent\textbf{Outline.} This paper is organized as follows. We recall the necessary definitions and the notation in Section~\ref{sec:def}. In Section~\ref{sec:main}, we present the general structure of the proof of Theorem~\ref{th:main}. In particular, we state the two key Lemmas~\ref{lemma:binary_projections} and \ref{lemma:stability} mentioned above, and discuss the optimality of each assertion. Then, in Section~\ref{sec:binary}, we prove that the $2$-binomial complexity of any binary $1$-balanced word is equal to its subword complexity. This covers statements (i)--(iii) of Theorem~\ref{th:main} in the case $d=2$. Section~\ref{sec:binary_projections} is devoted to the proof of Lemma~\ref{lemma:projections_kbin} (a generalization of Lemma~\ref{lemma:binary_projections}), as well as to statements (i)--(iii) of Theorem~\ref{th:main} (for $d\geq 3$). Section~\ref{sec:stability} is dedicated to the proof of Lemma~\ref{lemma:stability} and statement (iv) of Theorem~\ref{th:main}. Finally, we gather some open questions in Section~\ref{sec:conclusion}.

\section{Definitions and notation}\label{sec:def}

\subsection{Basics and complexities}

\noindent\textbf{Basics.} Let $\A$ be a finite set, referred to as an \emph{alphabet}. A \emph{finite word} $w$ written over the alphabet $\A$ is an element of $\A^*:=\bigcup_{i=0}^{\infty}\A^i$, where $\A^0=\{\epsilon\}$ with $\epsilon$ denoting the empty word. For any $k\in\N_{>0}$, we write $\A^{\leq k}:=\bigcup_{i=0}^{k}\A^i$ for the set of all finite words of length at most $k$. A (right) \emph{infinite word} $w$ written over the alphabet $\A$ is an element of $\A^\N$.

A \emph{factor} or \emph{contiguous subword} $u$ of length $n$ of a word $w$ is a finite word formed of $n$ consecutive letters of $w$, while a \emph{scattered subword} of length $n$ of $w$ is a word consisting of $n$, not necessarily consecutive, letters of $w$. For instance, if $w=11212$, then $11$ is a factor (and therefore also a scattered subword) of $w$, $22$ is a scattered subword but not a factor of $w$, and $221$ is neither a factor nor a scattered subword of $w$. The \emph{language of $w$}, denoted $\lan(w)$, is the set of all factors of $w$. We also denote by $\lan_n(w):=\lan(w)\cap\A^{n}$ the set of length $n$ factors of $w$.

We write $w[n]$ for the letter occurring at position $n$ in the word $w$ (the indexation starts at $0$), and $w[n:m]$ for the length $m-n+1$ factor of $w$ starting at position $n$ and ending at position $m$, both included.

For a finite word $w\in\A^*$, we denote by $|w|$ its length, and by $|w|_a$ the number of occurrences of the letter $a\in\A$ in $w$. More generally, for a finite word $u\in\A^*$, we denote by $|w|_u$ the number of occurrences of $u$ in $w$ as a factor, and by $\binom{w}{u}$ the number of its occurrences in $w$ as a scattered subword. For instance, if $w=11212$, then $|w|=5$, $|w|_1=3$, $|w|_{12}=2$, and $\binom{w}{12}=5$. Note that the empty word has length zero: $|\epsilon|=0$.

\begin{remark}\label{rk:binomial_coefficients}
(i) Counting how many times a letter $a\in\A$ appears in $w$, either as a factor or as a scattered subword, is the same. Hence, $\binom{w}{a}=|w|_a=\binom{|w|_a}{1}$, where the last term refers to the usual binomial coefficient over the integers.\\
(ii) More generally, for a word of the form $a^m$ with $m\in\N_{>0}$ (\emph{i.e.}, the word consisting of $m$ consecutive occurrences of $a$), we have $\binom{w}{a^m}=\binom{|w|_a}{m}$. Indeed, there are $\binom{|w|_a}{m}$ different ways to choose $m$ distinct occurrences of $a$ in $w$, each corresponding to one occurence of $a^m$ in $w$ as a scattered subword.\\
(iii) Since by convention $a^0:=\epsilon$ for any letter $a\in\A$, it is natural to adopt the convention that $\binom{w}{\epsilon}=\binom{|w|_a}{0}=1$ for every finite word $w\in\A^*$.
\end{remark}

We say that a word $w$ is \emph{$c$-balanced} ($c\in\N$) if, for every equally long factors $u,v$ of $w$, and for every letter $a\in\A$, we have $\big||u|_a-|v|_a\big|\leq c$. For example, the word $w=11212$ is $1$-balanced while the word $w'=11122$ is $2$-balanced but not $1$-balanced. The optimal constant of balancedness is termed \emph{imbalance} in \cite{AndPhD,And21,AV22,AV23,AV24}. An infinite word is said to be \emph{unbalanced} if it is not $c$-balanced for any $c\in\N$.

Given two alphabets $\A$ and $\B$, a substitution $\sigma:\A\to\B^*$ is a letter-to-word application, which we extend to a morphism over $\A^*$ (seen as the free monoid for the concatenation). For instance, if $\sigma:\{0,1\}\to\{0,1\}^*$ is defined by $\sigma:0\mapsto 01,\,1\mapsto 0$, then $\sigma(010)=\sigma(0)\sigma(1)\sigma(0)=01001$. More generally, substitutions are also extended over the set of infinite words in the following standard way: if $w=a_0a_1a_2a_3\ldots$ where each $a_i$ is a letter from the alphabet $\A$, then $\sigma(w):=\sigma(a_0)\sigma(a_1)\sigma(a_2)\sigma(a_3)\ldots$.

\bigskip

\noindent\textbf{Complexities.} Two finite words $u,v\in\A^*$ are said to be \emph{abelian equivalent} (resp. \emph{$k$-binomially equivalent}, where $k\in\N_{>0}$ is a parameter) when, for every letter $a\in\A$, $|u|_a=|v|_a$ (resp. when, for every finite word $x\in\A^{\leq k}$, $\binom{u}{x}=\binom{v}{x}$). In such cases, we write $u\sim_{\text{ab}}v$ (resp. $u\sim_{k}v$). For example, if $u=1212221$ and $v=2112212$, then $u\sim_{\text{ab}}v$ and $u\sim_{2}v$, but $u\nsim_{3}v$.

\begin{remark}\label{rk:k-bin}
It follows readily from the above definitions that:\\
(i) If two finite words are abelian (resp. $k$-binomially) equivalent, then they have the same length.\\
(ii) Two $(k+1)$-binomially equivalent words are also $k$-binomially equivalent.\\
(iii) Since for any letter $a\in\A$, $\binom{u}{a}=|u|_a$, two words are $1$-binomially equivalent if and only if they are abelian equivalent.\\
(iv) Two finite words $u$ and $v$ of lengths $|u|,|v|\leq k$ are $k$-binomially equivalent if and only if they are equal.
\end{remark}

These binary relations are equivalence relations. In particular, they partition the language of a word into abelian and $k$-binomial classes, respectively.

The \emph{subword complexity} (resp. \emph{abelian complexity} and \emph{$k$-binomial complexity}) of a word $w$ is the function $p_w:\N\to\N$ (resp. $\rho_w^{ab}:\N\to\N$ and $b_w^k:\N\to\N$) that counts, for each integer $n\in\N$, the number of distinct factors (resp. abelian classes and $k$-binomial classes) of $w$ of length\footnote{
Since two factors are abelian (resp. $k$-binomially) equivalent only if they are of the same length, each abelian (resp. $k$-binomial) class contains only equally long factors. The common length of the factors of a given abelian (resp. $k$-binomial) class is referred to as the length of the class. 
} $n$:
\[
    \begin{array}{rcclrcclrccl}
        p_w:&\N&\rightarrow&\N & \rho_w^{ab}:&\N&\rightarrow&\N & b_w^k:&\N&\rightarrow&\N\\
        & n&\mapsto&\#\big(\lan_n(w)\big) && n&\mapsto&\#\big(\lan_n(w)/_{\abeq}\big) && n&\mapsto&\#\big(\lan_n(w)/_{\sim_k}\big)
    \end{array}
\]

Remark~\ref{rk:k-bin} has the following immediate consequence: for every infinite word $w\in\A^\N$, and every integer $k\in\N_{>0}$,
\begin{equation}\label{eq:scale}
    \rho_w^{ab}=b_w^1 \;\leq\; b_w^k \;\leq\; b_w^{k+1} \;\leq\; p_w.
\end{equation}
In particular, if the $k$-binomial complexity of a word coincides with its subword complexity, then so does its $l$-binomial complexity for every $l\geq k$.

Another consequence of Remark~\ref{rk:k-bin} is that $b_w^k$ converges pointwise to $p_w$ as $k\to\infty$. Consequently, and as we announced it in the introduction, the $k$-binomial complexities form a scale between the abelian complexity and the subword complexity.

\subsection{Classes of words}

\noindent\textbf{Sturmian and quasi-Sturmian words.} An infinite word is called \emph{Sturmian} if its subword complexity is given by $n\mapsto n+1$. The most famous example of a Sturmian word is the Fibonacci word
\[
    w_{\mathtt{fibo}}=010 01 010 010 01 010 01 010 010 01 010 010 01 010 01 010 010 01 010\ldots
\]
The set of Sturmian words admits many characterizations. For example, an infinite word $w$ is Sturmian if and only if it is binary, $1$-balanced, and non-eventually periodic \cite{MH40}.

A consequence of Morse and Hedlund's Theorem---which states that an infinite word $w$ is eventually periodic if and only if there exists $m\in\N$ such that $p_w(m)\leq m$ \cite{MH38}---is that Sturmian words are exactly the (binary) non-eventually periodic words with minimal complexity. Moreover, the minimal subword complexity of a $d$-ary non-eventually periodic word is given by $n\in\N_{>0}\mapsto n+(d-1)$ (by $d$-ary, we require that $d$ distinct letters appear in the word). Ferenczi and Mauduit provided a characterization of such words in \cite[Lemma~1 and Lemma~4]{FM97}.

More generally, an infinite word $w$ is called \emph{quasi-Sturmian} if there exist two integers $n_0,k_0$ such that $p_w(n)=n+k_0$ for every $n\geq n_0$. These words have been characterized by Cassaigne in \cite[Proposition~8]{Cas97}.

\bigskip

\noindent\textbf{Words with $1$-balanced binary projections.} Let $w$ be a finite or infinite word written over the alphabet $\A$. Its \emph{projection onto a subalphabet $\B\subset\A$}, denoted $\pi_\B(w)$, is obtained by erasing from $w$ all the letters $a\in\A\setminus\B$. Formally, $\pi_\B:\A\to\B^*$ is the substitution defined by $a\in\A\setminus\B\mapsto\epsilon$ and $b\in\B\mapsto b$. For instance, if $w=aabaca$, then $\pi_{a,b}(w)=aabaa$. We say that $w$ has \emph{$1$-balanced binary projections} when all its binary projections are $1$-balanced, \emph{i.e.}, when for every pair of distinct letters $a,b\in\A$, the projection $\pi_{a,b}(w)$ is $1$-balanced.

Words with $1$-balanced binary projections generalize the \emph{ternary words with Sturmian erasures} introduced in \cite{DGK04}. As we will show later, $1$-balanced words, $d$-ary words with subword complexity $n\in\N_{>0}\mapsto n+(d-1)$, and hypercubic billiard words all have $1$-balanced binary projections (see Lemmas~\ref{lemma:c_balanced}, \ref{lemma:minimal_complexityb}, \ref{lemma:projection_billiard} and \ref{lemma:balance_square_billiard}).

\bigskip

\noindent\textbf{Hypercubic billiard words.} In dimension $d\geq 1$, a \emph{hypercubic billiard word} is an infinite $d$-ary word that encodes the sequence of hyperfaces (\emph{i.e.}, $(d-1)$-dimensional faces) successively hit by a billiard ball moving in the unit hypercube of $\R^d$, where parallel hyperfaces are labeled by the same letter (see Figure~\ref{fig}). In what follow, the parameter $\theta$ denotes the initial momentum of the ball and the parameter $x$ denotes its initial position.

\begin{figure}[!h]
\begin{center}
    \includegraphics[scale=0.3]{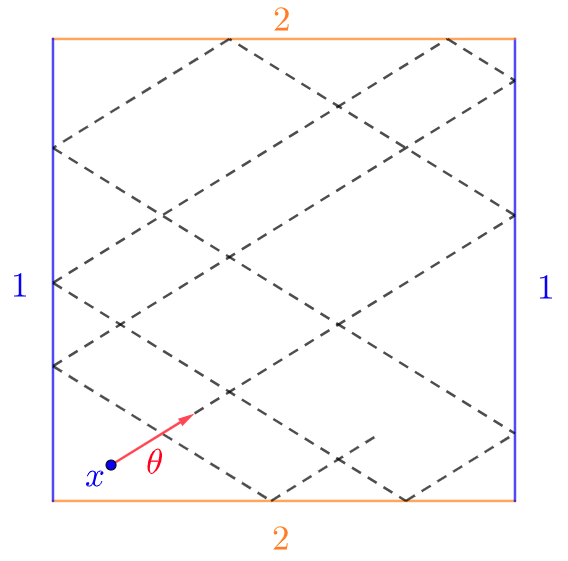}
    \caption{\label{fig} The ball, initially located in $x$ with a momentum $\theta$, generates the infinite word $w=1211212112...$}
\end{center}
\end{figure}

It is well known that a square billiard word generated by a momentum $\theta$ whose coordinates are rationally independent is a Sturmian word \cite{MH40}. More generally, any square billiard word is $1$-balanced. Thus, since any projection of a hypercubic billiard word is itself a lower-dimensional hypercubic billiard word, it follows that all the binary projections of a hypercubic billiard words are $1$-balanced (see Lemmas~\ref{lemma:projection_billiard} and \ref{lemma:balance_square_billiard}).

\bigskip

\noindent\textbf{Coloring of words.} Given two infinite words $w_0\in\A^\N$ and $w_1\in\B^\N$ written over disjoint alphabets ($\A\cap\B=\emptyset$), we define, for every letter $a\in\A$, a new infinite word $\colour(w_0,a,w_1)\in(\A\sqcup\B\setminus\{a\})^\N$ obtained by \emph{coloring the letter $a$ in $w_0$ with $w_1$}. Formally, we set
\[
    \colour(w_0,a,w_1)[n]:=\begin{cases} w_0[n] & \text{if $w_0[n]\neq a$,} \\ w_1[k-1] & \text{if $w_0[n]=a$ and $\big|w_0[0:n]\big|_a=k$.} \end{cases}
\]
For example, the coloring of the letter $\mathtt{a}$ in the Fibonacci word $w_0\in\{\mathtt{0,a}\}^\N$ by the Fibonacci word $w_1\in\{\mathtt{1},\mathtt{2}\}^\N$ itself is obtained as follows:
\[
    \begin{array}{rcl}
        w_0 &=& \mathtt{0a0 0a 0a0 0a0 0a 0a0 0a 0a0 0a0 0a 0a0 0a0 0a 0a0 0a 0a0 0a0 0a 0a0}\ldots\\
        w_1 &=& \mathtt{\aw 1\aw \aw 2 \aw 1\aw \aw 1\aw \aw 2 \aw 1\aw \aw 2 \aw 1\aw \aw 1\aw \aw 2 \aw 1\aw \aw 1\aw \aw 2 \aw 1\aw \aw 2 \aw 1 \aw \aw 1\aw \aw 2 \aw 1\aw}\ldots\\
        \colour(w_0,\mathtt{a},w_1) &=& \mathtt{010 02 010 010 02 010 02 010 010 02 010 010 02 010 02 010 010 02 010}\ldots
    \end{array}
\]

In the literature, colorings of words are used, for example, to characterize non-eventually periodic $1$-balanced words. Indeed, any such word can be obtained as the coloring of a Sturmian word by one or two words with \emph{constant gaps} \cite{Gra73,Hub00}. More recently, Dvo{\v{r}}{\'a}kov{\'a}, Mas{\'a}kov{\'a}, Ma{\v{s}}ek, and Pelantov{\'a} introduced a new class of ternary and quaternary words obtained by coloring a Sturmian word with one or two Sturmian words \cite{DMP23,DMP24}. The word $\colour(w_0,\mathtt{a},w_1)$ defined above belongs to this class. These words share several properties with cubic billiard words, such as quadratic complexity \cite{AMST94a, AMST94b, Bed03} (interested readers may also consult \cite{Bar95,Bed09} for a generalization of this result to arbitrary dimension $d$), $2$-balancedness \cite{Vui03}, and an eventually constant abelian complexity, equal to $4$ \cite{AV23}.

More generally, we define \emph{$d$-ary Sturmian colorings} iteratively as follows:
\begin{itemize}
	\vspace{-0.2cm}\item[-] $1$-ary Sturmian colorings are the infinite constant words (\emph{e.g.}, $0000000\ldots$),
	\vspace{-0.2cm}\item[-] a $d$-ary word is a Sturmian coloring if it can be written as $\colour(w_0,a,w_1)$, where $w_0\in\A^\N$ is a $(d-1)$-ary Sturmian coloring, $a\in\A$, and $w_1$ is a Sturmian word over a binary alphabet $\B$ disjoint from $\A$ (\emph{i.e.}, $\A\cap\B=\emptyset$).
\end{itemize}
\vspace{-0.2cm} It follows from this definition that binary Sturmian colorings are exactly Sturmian words.

\subsection{Comparison between the four classes of words in Theorem~\ref{th:main}.}

The four classes of words this article focuses on are widely studied in the literature and are presented as generalizations of Sturmian words. Note that none of these classes is contained in another. Indeed, if we set
\[
	\begin{array}{lcl}
		C_1 &:=& \{\text{$d$-ary $1$-balanced words, for $d\geq 3$}\},\\
		C_2 &:=& \{\text{words of subword complexity $n\in\N_{>0}\mapsto n+(d-1)$, for $d\geq 3$}\},\\
		C_3 &:=& \{\text{$d$-ary hypercubic billiard words, for $d\geq 3$}\},\\
		C_4 &:=& \{\text{$d$-ary Sturmian colorings, for $d\geq 3$}\},
	\end{array}
\]
then:
\begin{itemize}
	\item[-] $C_1\nsubseteq C_2\cup C_3\cup C_4$. Indeed, $w:=3(12)^\omega$ belongs to $C_1$ but not to $C_2$ (its subword complexity is bounded), nor to $C_3$ or $C_4$ since it is not recurrent.
	\item[-] $C_2\nsubseteq C_1\cup C_3\cup C_4$. Let $w:=3\cdot S(w_0)=3 21 12 121 121 12\ldots$, where $w_0\in\{1,2\}^\N$ is the Fibonacci word and $S$ denotes the \emph{shift operator}, \emph{i.e.}, the operator that erases the first letter of an infinite word. Then $w$ belongs to $C_2$ but not to $C_1$ (it contains both factors $32$ and $11$), nor to $C_3$ or $C_4$ since it is not recurrent.
	\item[-] $C_3\nsubseteq C_1\cup C_2\cup C_4$. Let $w$ be a cubic billiard word with rationally independent letter frequencies whose inverses are also rationally independent (note that this is the generic situation). Then its subword complexity is given by $n\mapsto n^2+n+1$ \cite{Bar95} (see also \cite[Theorem~6]{Bed03}), which prevents it from belonging to $C_2$ or $C_4$ (the subword complexity of a ternary Sturmian coloring is bounded by $\alpha n^2(1+o(1))$, where $\alpha\in(0,1)$ denotes the frequency of the colored letter in the original Sturmian word \cite[Theorem~9 and Remark~10]{DMP24}). Furthermore, $w$ is not $1$-balanced \cite[Theorem~2]{AV22}. 
	\item[-] $C_4\nsubseteq C_1\cup C_2\cup C_3$. Let $w$ be a ternary Sturmian coloring with rationally independent letter frequencies whose inverses are also rationally independent (this is again the generic situation). Then its subword complexity satisfies $p_w(n)\sim_{n\to\infty}\alpha n^2$, where $\alpha$ again denotes the frequency of the colored letter in the original Sturmian word \cite[Theorem~16 and Remark~10]{DMP24}, which prevents it from belonging to $C_2$ or $C_3$. Moreover, $w$ is not $1$-balanced \cite[Theorem~9]{DMP24}.
\end{itemize}

However, these classes are not disjoint. An interesting example is the following. Let $w_0\in\{0,1\}^\N$ be the Fibonacci word (or any other Sturmian word), and set $w_1:=(23)^\omega$. One can show that the colored word $w:=\colour(w_0,0,w_1)$ belongs to $C_1\cap C_3\cap C_4$. In other words, $w$ is simultaneously a $1$-balanced ternary word, a cubic billiard word, and the coloring of a Sturmian word by another Sturmian word (of course, for a coloring different from the one defining $w$).

\section{Main results and proof strategies}\label{sec:main}

In this section we present the proof strategy and the intermadiate results required for the proof of Theorem~\ref{th:main}. We also discuss the optimality of each assertion.

\subsection{Binary $1$-balanced words}

We begin by generalizing, to the broader class of binary $1$-balanced words, the result of Rigo and Salimov concerning the $2$-binomial complexity of Sturmian words \cite[Theorem~7]{RS15}.

\begin{proposition}\label{prop:binary}
If $w$ is a binary $1$-balanced word, then its $2$-binomial complexity coincides with its subword complexity.
\end{proposition}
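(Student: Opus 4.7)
The plan is to reduce $2$-binomial equivalence on the binary alphabet to a pair of invariants (the letter counts and $\binom{\cdot}{01}$), and then to show that inside a $1$-balanced word the equality of these invariants forces the two factors to coincide. I would first observe that for binary words the identities $\binom{u}{00}=\binom{|u|_0}{2}$, $\binom{u}{11}=\binom{|u|_1}{2}$ and $\binom{u}{01}+\binom{u}{10}=|u|_0|u|_1$ show that $u\sim_2 v$ is equivalent to $u\abeq v$ together with $\binom{u}{01}=\binom{v}{01}$. Hence it suffices to prove that whenever two length-$n$ factors $u,v$ of $w$ satisfy $u\abeq v$, the further equality $\binom{u}{01}=\binom{v}{01}$ already forces $u=v$.

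Starting from the formula $\binom{u}{01}=\sum_{\ell=0}^{n-1}\ell\,u_\ell-\binom{|u|_1}{2}$ (obtained by summing, over each position carrying a $1$ in $u$, the number of $0$'s appearing before it) and carrying out a summation by parts, one rewrites
\[
    \binom{u}{01}-\binom{v}{01}\;=\;\sum_{\ell=0}^{n-1}\ell\,(u_\ell-v_\ell)\;=\;-\sum_{\ell=1}^{n-1}D(\ell),
\]
where $D(\ell)$ denotes the difference between the numbers of $1$'s in the length-$\ell$ prefixes of $u$ and of $v$. Note that $D(0)=0$ and that $D(n)=|u|_1-|v|_1=0$ by $u\abeq v$.

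The core step is then to use the $1$-balancedness of $w$ to sign-control $D$. Writing $u=w[i:i+n-1]$ and $v=w[j:j+n-1]$ with $i<j$, setting $d:=j-i$, and letting $X(k)$ denote the number of $1$'s in the length-$k$ prefix of $w$, one computes
\[
    D(\ell)\;=\;\bigl(X(i+\ell)-X(i)\bigr)-\bigl(X(j+\ell)-X(j)\bigr)\;=\;|w[i:j-1]|_1-|w[i+\ell:j+\ell-1]|_1,
\]
so $D(\ell)$ is the difference between the $1$-counts of two length-$d$ factors of $w$. Since $w$ is $1$-balanced, the $1$-counts of all its length-$d$ factors lie in some pair $\{\beta,\beta+1\}$; consequently $D(\ell)\in\{0,-1\}$ if $|w[i:j-1]|_1=\beta$, and $D(\ell)\in\{0,+1\}$ if $|w[i:j-1]|_1=\beta+1$. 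In either case $D$ has constant sign on $\{0,1,\ldots,n\}$.

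To conclude, the assumption $\binom{u}{01}=\binom{v}{01}$ forces $\sum_{\ell=1}^{n-1}D(\ell)=0$, which combined with the sign-control of $D$ yields $D\equiv 0$. Since $u_\ell-v_\ell=D(\ell+1)-D(\ell)$, this gives $u=v$ and hence $b_w^2(n)=p_w(n)$. The main conceptual obstacle is the third step: recognising that $D(\ell)$ is \emph{exactly} the difference of the $1$-counts of two parallel length-$d$ factors of $w$, so that the $1$-balancedness hypothesis delivers precisely the needed sign constraint. The remaining manipulations are routine.
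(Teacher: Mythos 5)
Your proof is correct, and it takes a genuinely different route from the paper's. The paper argues by contrapositive: from two distinct $2$-binomially equivalent factors it applies the cancellation property to extract words $u',v'$ whose first and last letters differ, then runs a case analysis on those boundary letters and computes $\binom{u'}{12}$ explicitly from a run-length decomposition, ultimately exhibiting two factors whose letter counts differ by $2$. Your argument is direct: after reducing $\sim_2$ on a binary alphabet to the pair of invariants $(\abeq,\ \binom{\cdot}{01})$ (a reduction the paper does not isolate, but which is easily checked via $\binom{u}{01}+\binom{u}{10}=|u|_0|u|_1$), you convert $\binom{u}{01}-\binom{v}{01}$ by Abel summation into $-\sum_{\ell=1}^{n-1}D(\ell)$, identify each $D(\ell)$ as the difference of the $1$-counts of two \emph{parallel} length-$d$ factors $w[i:j-1]$ and $w[i+\ell:j+\ell-1]$, and invoke $1$-balancedness to force all the $D(\ell)$ to share a sign, hence vanish. (I checked the summation by parts: $\sum_{\ell=0}^{n-1}\ell\bigl(D(\ell+1)-D(\ell)\bigr)=(n-1)D(n)-\sum_{\ell=1}^{n-1}D(\ell)$ with $D(n)=0$, as claimed; and the identification of $D(\ell)$ via the prefix-count function $X$ is exact.) Your version avoids the case analysis entirely and makes visible why $1$-balancedness is precisely the needed hypothesis — it is exactly what pins the discrepancy $D$ to $\{0,-1\}$ or $\{0,+1\}$, whereas $2$-balancedness would only give $|D|\le 2$ with no sign control, consistent with the Thue--Morse counterexample discussed after Proposition~\ref{prop:binary}. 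The paper's contrapositive, on the other hand, produces an explicit imbalance witness (two factors whose counts differ by $2$), which is slightly more informative when one wants to quantify the failure. One cosmetic point: when you write $u=w[i:i+n-1]$, $v=w[j:j+n-1]$ with $i<j$, you should note that if every pair of occurrences forces $i=j$ then $u=v$ trivially, so one may indeed assume $i<j$ up to swapping $u$ and $v$; this is immediate but worth a sentence.
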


According to the terminology introduced by Morse and Hedlund in \cite{MH40}, this generalization also applies to \emph{periodic} and \emph{skew} Sturmian words. As described in their work, these words have a combinatorial structure very similar to that of Sturmian words. Thus, this proposition represents only a slight extension of Rigo and Salimov's result. However, since it is the first step toward the proof of Theorem~\ref{th:main}---and for the sake of self-containedness---we provide a detailed proof in Section~\ref{sec:binary}.

The converse of Proposition~\ref{prop:binary} is false. Indeed, it is easy to see that the eventually constant binary word $w=11222222\ldots$ is $2$-balanced but not $1$-balanced, and yet its $2$-binomial complexity coincides with its subword complexity (\emph{cf} Proposition~\ref{prop:1bin}). More generally, for every integer $m\geq 2$, the eventually constant word $1^m 222222\ldots$ is $m$-balanced but not $(m-1)$-balanced, and its $2$-binomial complexity coincides with its subword complexity.

Furthermore, Proposition~\ref{prop:binary} does not admit any straightforward generalization. Indeed, the Thue-Morse word $w_{\mathtt{tm}}=0110100110010110\ldots$ is binary and $2$-balanced, but its $2$-binomial complexity is strictly smaller than its subword complexity (it sufficies to notice that $0110,1001\in\lan(w_{\mathtt{tm}})$ and $0110\sim_2 1001$). This shows that the assumption ``$w$ is $1$-balanced'' in Proposition~\ref{prop:binary} cannot be weakened to ``$w$ is $2$-balanced''. More generally, since for every $k\in\N_{>0}$, the $k$-binomial complexity of the Thue-Morse word never coincides with its subword complexity (indeed, the $k$-binomial complexity of any fixed point of a \emph{Parikh-constant substitution} is bounded, see \cite[Theorem~13]{RS15}), it follows that even the weaker statement---``if $w$ is a binary $2$-balanced word, then there exists an integer $k$ such that its $k$-binomial complexity coincides with its subword complexity''---does not hold.

\subsection{Words with binary $1$-balanced projections}

The following key lemma provides a sufficient condition for a word to have its $2$-binomial complexity equal to its subword complexity.

\begin{lemma}\label{lemma:binary_projections}
Let $d\geq 2$ and $w\in\{1,\ldots,d\}^\N$. If all the binary projections of $w$ are $1$-balanced, then its $2$-binomial complexity is equal to its subword complexity.
\end{lemma}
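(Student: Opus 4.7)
The plan is to reduce $2$-binomial equivalence of factors of $w$ to equality of all their binary projections, and then to invoke Proposition~\ref{prop:binary} on each projection separately. Suppose $u,v\in\lan_n(w)$ are $2$-binomially equivalent; I aim to show $u=v$, since this will force each $2$-binomial class of length $n$ in $\lan(w)$ to be a singleton, hence $b_w^2(n)=p_w(n)$, and then $b_w^2=p_w$ by \eqref{eq:scale}.

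Fix two distinct letters $a,b\in\{1,\ldots,d\}$. The six binomial coefficients $\binom{u}{a}$, $\binom{u}{b}$, $\binom{u}{aa}$, $\binom{u}{bb}$, $\binom{u}{ab}$ and $\binom{u}{ba}$ depend only on the projection $\pi_{a,b}(u)$, because scattered occurrences in $u$ of a word written over the subalphabet $\{a,b\}$ ignore every letter outside $\{a,b\}$. Hence $u\sim_2 v$ immediately entails $\pi_{a,b}(u)\sim_2\pi_{a,b}(v)$. Moreover, writing $u=w[i:j]$, the projection $\pi_{a,b}(u)$ is exactly the factor of $\pi_{a,b}(w)$ formed by the letters of $\{a,b\}$ appearing between positions $i$ and $j$ in $w$; in particular both $\pi_{a,b}(u)$ and $\pi_{a,b}(v)$ belong to $\lan(\pi_{a,b}(w))$. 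Since $\pi_{a,b}(w)$ is binary and $1$-balanced by hypothesis, Proposition~\ref{prop:binary} applies to it, so any two of its $2$-binomially equivalent factors coincide. Therefore $\pi_{a,b}(u)=\pi_{a,b}(v)$.

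It remains to upgrade this pairwise agreement of projections to the equality $u=v$. Suppose for a contradiction that $u\neq v$, and let $i$ be the smallest index at which they differ. Set $a:=u[i]$ and $b:=v[i]$, two distinct letters. The length-$i$ prefixes of $u$ and $v$ are equal, so their $\pi_{a,b}$-projections coincide; but position $i$ extends $\pi_{a,b}(u)$ by the letter $a$ and $\pi_{a,b}(v)$ by the letter $b$, so $\pi_{a,b}(u)$ and $\pi_{a,b}(v)$ already disagree at that position, contradicting the previous paragraph. Hence $u=v$, as required. The main conceptual step is the reduction to a single pair $(a,b)$: once one notices that the six relevant binomial coefficients are $\pi_{a,b}$-invariant, the proof becomes a clean combination of Proposition~\ref{prop:binary} with the elementary observation that the family of binary projections separates words of equal length.
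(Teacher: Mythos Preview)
Your proof is correct and follows essentially the same route as the paper: you show that $u\sim_2 v$ forces $\pi_{a,b}(u)\sim_2\pi_{a,b}(v)$ for every pair $a,b$ (because the relevant binomial coefficients are unchanged by erasing letters outside $\{a,b\}$), then apply Proposition~\ref{prop:binary} to each $1$-balanced projection, and finally reconstruct $u=v$ from the equality of all binary projections. The only cosmetic difference is that the paper factors these two steps through the more general Lemma~\ref{lemma:projections_kbin} (valid for arbitrary $k$) and a separate reconstruction Lemma~\ref{lemma:reconstruction}, whereas you inline both for $k=2$; the underlying argument is identical.
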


This result is a special instance of the following more general lemma.

\begin{lemma}\label{lemma:projections_kbin}
Let $d\geq 2$, $k\in\N_{>0}$ and $w\in\{1,\ldots,d\}^\N$. If, for every pair of distinct letters $i,j\in\{1,\ldots,d\}$, $b_{\pi_{i,j}(w)}^k=p_{\pi_{i,j}(w)}$, then the $k$-binomial complexity of $w$ coincides with its subword complexity.
\end{lemma}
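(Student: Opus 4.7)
The plan is as follows. By~\eqref{eq:scale}, we always have $b_w^k\leq p_w$, so it suffices to establish the reverse inequality. Equivalently, I would show that any two factors $u,v$ of $w$ with $u\sim_k v$ must be equal (note that they automatically have the same length by Remark~\ref{rk:k-bin}(i)).

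The first step is to observe that $k$-binomial equivalence is preserved under every binary projection. For any pair of distinct letters $i,j\in\{1,\ldots,d\}$ and any word $x\in\{i,j\}^{\leq k}$, every occurrence of $x$ as a scattered subword of $u$ uses only letters from $\{i,j\}$; erasing the other letters therefore does not affect the count, so that $\binom{u}{x}=\binom{\pi_{i,j}(u)}{x}$. Applying this identity to both $u$ and $v$, I would deduce that $u\sim_k v$ implies $\pi_{i,j}(u)\sim_k\pi_{i,j}(v)$ for every pair $\{i,j\}$.

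The second step invokes the hypothesis. Since a factor of $w$ projects to a factor of $\pi_{i,j}(w)$, the words $\pi_{i,j}(u)$ and $\pi_{i,j}(v)$ belong to $\lan(\pi_{i,j}(w))$; and since by assumption $b_{\pi_{i,j}(w)}^k=p_{\pi_{i,j}(w)}$, every $k$-binomial class of $\lan(\pi_{i,j}(w))$ is a singleton. Therefore $\pi_{i,j}(u)=\pi_{i,j}(v)$ for every pair of distinct letters $i,j$. It then remains to show that a finite word is determined by the family of its binary projections. I would establish this last point by induction on the length, as follows. Let $a:=u[0]$. For every letter $b\neq a$, the word $\pi_{a,b}(u)$ begins with $a$, and hence so does $\pi_{a,b}(v)$. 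This forces $v[0]=a$, for if we had $v[0]=c$ with $c\neq a$, then $\pi_{a,c}(v)$ would begin with $c$, contradicting $\pi_{a,c}(u)=\pi_{a,c}(v)$. Stripping this common first letter from both $u$ and $v$ preserves the equality of all binary projections, and the induction concludes that $u=v$.

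I do not expect any serious obstacle: each step is either an immediate manipulation of the definitions or an elementary induction. The only place where a little care is required is the reconstruction argument of the third step, to ensure that the recovery of a word from the collection of its binary projections is truly unambiguous; the short inductive argument sketched above handles this cleanly.
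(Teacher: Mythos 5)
Your proposal is correct and follows essentially the same route as the paper: project the $k$-binomial equivalence onto each binary subalphabet, apply the hypothesis to get equality of projections, and then recover $u=v$ from the family of binary projections. Your inductive reconstruction argument is just a rephrasing of the paper's Lemma~\ref{lemma:reconstruction} (proved there via the first position of discrepancy), so there is no substantive difference.
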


\begin{proof}[Proof of Lemma~\ref{lemma:binary_projections}]
Thanks to Proposition~\ref{prop:binary}, if all the binary projections of a given word $w$ are $1$-balanced, then the $2$-binomial complexity of each binary projection coincides with its subword complexity. As a result, Lemma~\ref{lemma:binary_projections} follows from Lemma~\ref{lemma:projections_kbin} applied with $k=2$.
\end{proof}

The proof of Lemma~\ref{lemma:projections_kbin} is given in Section~\ref{sec:binary_projections}, where we also establish statements (i)--(iii) of Theorem~\ref{th:main} by showing that all the binary projections of any of the following words are $1$-balanced:
\begin{itemize}
    \vspace{-0.2cm}\item[-] $d$-ary $1$-balanced words (Lemma~\ref{lemma:c_balanced}),
    \vspace{-0.2cm}\item[-] $d$-ary words with subword complexity $n\in\N_{>0}\mapsto n+(d-1)$ (Lemma~\ref{lemma:minimal_complexityb}),
    \vspace{-0.2cm}\item[-] hypercubic billiard words in dimension $d$ (Lemmas~\ref{lemma:projection_billiard} and \ref{lemma:balance_square_billiard}).
\end{itemize}

Note that statement (ii) of Theorem~\ref{th:main} (words with subword complexity $n\in\N_{>0}\mapsto n+(d-1)$) cannot be extended to all quasi-Sturmian words. Indeed, consider the substitution $\sigma:1\mapsto 1221, \, 2\mapsto 2112$. Then, according to the characterization by Cassaigne (see \cite[Proposition~8]{Cas97}), for every Sturmian word $w_0\in\{1,2\}^\N$, the image word $w:=\sigma(w_0)$ is a quasi-Sturmian word. Since $1221\sim_2 2112$, it is clear that $b_w^2\neq p_w$.

It is worth mentioning that the result for hypercubic billiard words holds for \textbf{every} such word: no additional assumption on the initial momentum $\theta$ of the ball is required. This contrasts with the expression of their subword complexity \cite{Bar95,Bed09}, which only holds under additional hypotheses on the momentum $\theta$. Thus, even when no explicit expression for the subword complexity is known, the $2$-binomial complexity of these words is still equal to their subword complexity.

\medskip

The converses of Lemmas~\ref{lemma:binary_projections} and \ref{lemma:projections_kbin} do not hold: the Tribonacci word 
\[
    w_{\texttt{tribo}}=1213121 121312 1213121 1213 1213121 121312 1213121 1213121\ldots
\]
provides a counterexample. For instance, its $2$-binomial complexity coincides with its subword complexity; however, none of its three binary projections is $1$-balanced. Indeed, one can check that
\[
    \begin{array}{c}
        u=11211211211211,\,v=21211211211212\in\lan_{14}\big(\pi_{1,2}(w_{\texttt{tribo}})\big) \hspace{0.5cm} \text{and} \hspace{0.5cm} |u|_1-|v|_1=2,\\
        \\
        u=1111,\,v=3113\in\lan_{4}\big(\pi_{1,3}(w_{\texttt{tribo}})\big) \hspace{0.5cm} \text{and} \hspace{0.5cm} |u|_1-|v|_1=2,\\
        \\
        u=22322322322322,\,v=32322322322323\in\lan_{14}\big(\pi_{2,3}(w_{\texttt{tribo}})\big) \hspace{0.5cm} \text{and} \hspace{0.5cm} |u|_2-|v|_2=2,
    \end{array}
\]
which proves that the converse of Lemma~\ref{lemma:binary_projections} is false. More generally, none of the three binary projections of the Tribonacci word has its $2$-binomial complexity equal to its subword complexity. Indeed, one can check that
\[
    \begin{array}{c}
        u=2112112112112112,\,v=1212112112112121\in\lan_{16}\big(\pi_{1,2}(w_{\texttt{tribo}})\big) \hspace{0.5cm} \text{and} \hspace{0.5cm} u\sim_2 v,\\
        \\
        u=311113,\,v=131131\in\lan_{6}\big(\pi_{1,3}(w_{\texttt{tribo}})\big) \hspace{0.5cm} \text{and} \hspace{0.5cm} u\sim_2 v,\\
        \\
        u=3223223223223223,\,v=2323223223223232\in\lan_{16}\big(\pi_{2,3}(w_{\texttt{tribo}})\big) \hspace{0.5cm} \text{and} \hspace{0.5cm} u\sim_2 v,
    \end{array}
\]
which shows that the converse of Lemma~\ref{lemma:projections_kbin} is false.

\subsection{Stability by coloring}

We now present our second key lemma, which establishes a stability result under coloring for words whose $k$-binomial complexity coincides with their subword complexity. Its proof is given in Section~\ref{sec:stability}.

\begin{lemma}\label{lemma:stability}
Let $w_0\in\A^\N$ and $w_1\in\B^\N$ be two infinite words written over disjoint alphabets. If there exists $k\in\N_{>0}$ such that $b_{w_0}^k=p_{w_0}$ and $b_{w_1}^k=p_{w_1}$, then, for every letter $a\in\A$, the $k$-binomial complexity of $\colour(w_0,a,w_1)$ coincides with its subword complexity.
\end{lemma}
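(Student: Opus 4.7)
The plan is to prove that, under the stated hypotheses, any two factors $u,v\in\lan(w)$ with $u\sim_k v$ must in fact be equal, where $w:=\colour(w_0,a,w_1)$. This immediately yields $b_w^k=p_w$. To carry this out, I would exploit two natural letter-alphabet maps that recover $w_0$ and $w_1$ from $w$ and transport $\sim_k$ correctly.

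Concretely, I would introduce the letter-to-letter substitution $\tau:\A\sqcup\B\setminus\{a\}\to\A$ defined by $\tau(a'):=a'$ for $a'\in\A\setminus\{a\}$ and $\tau(b):=a$ for every $b\in\B$, together with the projection $\pi_\B:\A\sqcup\B\setminus\{a\}\to\B^*$ that erases every letter of $\A\setminus\{a\}$ and keeps every letter of $\B$ unchanged. By the very definition of the coloring process, $\tau(w)=w_0$ and $\pi_\B(w)$ is a prefix of $w_1$; in particular, $\tau(u),\tau(v)$ are equally long factors of $w_0$, and $\pi_\B(u),\pi_\B(v)$ are equally long factors of $w_1$.

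The technical heart of the proof is the verification that both maps preserve $k$-binomial equivalence. For $\pi_\B$, any scattered occurrence in $\pi_\B(u)$ of a word $x\in\B^{\leq k}$ lifts uniquely to a scattered occurrence of $x$ in $u$ (the erased letters cannot be selected, since they do not appear in $x$), giving $\binom{\pi_\B(u)}{x}=\binom{u}{x}$ and hence $\pi_\B(u)\sim_k\pi_\B(v)$. For $\tau$, grouping scattered occurrences of $x\in\A^{\leq k}$ in $\tau(u)$ according to the word $y\in(\A\sqcup\B\setminus\{a\})^{|x|}$ they lift to in $u$ yields the decomposition
\[
    \binom{\tau(u)}{x}\;=\;\sum_{\substack{y\in(\A\sqcup\B\setminus\{a\})^{|x|}\\\tau(y)=x}}\binom{u}{y}.
\]
Since $|x|\leq k$ forces $|y|\leq k$, the assumption $u\sim_k v$ implies $\tau(u)\sim_k\tau(v)$.

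Combining these two observations with the hypotheses $b_{w_0}^k=p_{w_0}$ and $b_{w_1}^k=p_{w_1}$, I obtain $\tau(u)=\tau(v)$ and $\pi_\B(u)=\pi_\B(v)$. These two equalities suffice to reconstruct $u$ from $v$: $\tau(u)=\tau(v)$ specifies, at each position, whether $u$ and $v$ share a common letter of $\A\setminus\{a\}$ or both carry a (possibly distinct) letter of $\B$, and $\pi_\B(u)=\pi_\B(v)$ then reads off, in order, the precise $\B$-letter at each $\B$-position. Hence $u=v$. I expect the main, though rather mild, obstacle to be the preservation of $\sim_k$ by $\tau$ via the displayed decomposition; the remainder is essentially positional bookkeeping and does not seem to require further combinatorial input.
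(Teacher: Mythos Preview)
Your proposal is correct and follows essentially the same route as the paper: the paper introduces the same letter-to-letter substitution (called $\sigma$ there) and the same projection $\pi_\B$, proves the identical decomposition $\binom{\sigma(u)}{x}=\sum_{\sigma(y)=x}\binom{u}{y}$, and then concludes from $\sigma(u)=\sigma(v)$ and $\pi_\B(u)=\pi_\B(v)$ via the reconstruction $u=\colour(\sigma(u),a,\pi_\B(u))$. Your positional bookkeeping for the reconstruction is equivalent to the paper's coloring identity, and your remark that $\pi_\B(w)$ is a prefix of $w_1$ is even slightly more careful than the paper's statement $\pi_\B(w)=w_1$.
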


As an immediate consequence, Lemma~\ref{lemma:stability} furnishes an inductive method to construct words whose $k$-binomial complexity is equal to their subword complexity over increasingly larger alphabets. In particular, since the $2$-binomial complexity of Sturmian words coincides with their subword complexity, statement (iv) of Theorem~\ref{th:main} follows from Lemma~\ref{lemma:stability} via a straightforward induction. Note that this statement cannot be proved from Lemma~\ref{lemma:binary_projections} or Lemma~\ref{lemma:projections_kbin}. Indeed, the binary projection
\[
	\pi_{0,2}(w)=00 02 00 00 02 00 02 00 00 02 00 00 02 00 02 00 00 02 00\ldots
\]
of the infinite word $w:=\colour(w_0,a,w_1)$ defined in Section~\ref{sec:def}---obtained by coloring the letter $a$ in the Fibonacci word $w_0\in\{0,a\}^\N$ with the Fibonacci word $w_1\in\{1,2\}^\N$---is not $1$-balanced (it contains the factors $\mathtt{00000}$ and $\mathtt{20002}$). Moreover, since $2000002$ and $0200020$ belong to $\lan_7(\pi_{0,2}(w))$ and satisfy $2000002\sim_2 0200020$, it follows that the $2$-binomial complexity of $\pi_{0,2}(w)$ also differs from its subword complexity.

The converse of Lemma~\ref{lemma:stability} is trivially true: any word $w$ whose $k$-binomial complexity coincides with its subword complexity can be expressed as $\colour(w_0,a,w_1)$, where $w_0$ and $w_1$ are two infinite words over disjoint alphabets such that $b_{w_0}^k=p_{w_0}$ and $b_{w_1}^k=p_{w_1}$. Indeed, the simple choice $w_0=111111\ldots$, $a=1$, and $w_1=w$ suffices. However, the more interesting converse statement---requiring that both $w_0$ and $w_1$ are defined over alphabets of \textbf{strictly smaller} size than $p_w(1)$ (the number of distinct letters occurring in $w$)---does not hold. The Tribonacci word is again a counterexample to such a statement. Indeed, its $2$-binomial complexity coincides with its subword complexity but, as already seen, none of its three binary projections satisfies this property. In particular, this shows that the Tribonacci word cannot be obtained as the coloring of two binary words such that both of them have their $2$-binomial complexity equal to their subword complexity.

\section{Proof of Proposition~\ref{prop:binary}}\label{sec:binary}

This section is devoted to the proof of Proposition~\ref{prop:binary}: the $2$-binomial complexity of any binary $1$-balanced word coincides with its subword complexity. As already mentioned, the case of Sturmian words was treated by Rigo and Salimov in \cite{RS15}. Following the terminology of Morse and Hedlund \cite{MH40}, it remains only to consider the cases of \emph{periodic} and \emph{skew} Sturmian words. Morse and Hedlund showed that these words have a combinatorial structure closely similar to that of Sturmian words, and the proof of Rigo and Salimov can be extended straightforwardly to these two cases. However, for the reader's convenience and to ensure this article remains self-contained, we provide a complete proof of Proposition~\ref{prop:binary}. Although our proof is, in spirit, essentially the same as that of Rigo and Salimov, we take this opportunity to present it in a more ``elementary form'' -- that is, requiring no prerequisites beyond the definition of $1$-balanced words.

\begin{proof}[Proof of Proposition~\ref{prop:binary}]
We proceed by contrapositive and consider an infinite word $w\in\{1,2\}^\N$ such that there exists $m\in\N_{>0}$ for which $b_w^2(m)<p_w(m)$, \emph{i.e.}, such that there exists two factors $u,v\in\lan_m(w)$ for which $u\neq v$ and $u\sim_{2\text{-bin}}v$. Our goal is to prove that such word $w$ is not $1$-balanced.

Since $u\neq v$, there exists $p,u',v',s\in\lan(w)$ with $u',v'\neq\epsilon$ such that $u=pu's$, $v=pv's$ and the first (resp. the last) letter of $u'$ differs from that of $v'$. We claim that $u'$ and $v'$ are $2$-binomially equivalent (this result is called the \emph{cancellation property} in \cite[Lemma~2.2]{RSW24}). Indeed, for every $i\in\{1,2\}$ one immediatly verifies that
\[
    \binom{u}{i}=\binom{pu's}{i}=\binom{p}{i}+\binom{u'}{i}+\binom{s}{i} \hspace{0.5cm}\text{ and }\hspace{0.5cm} \binom{v}{i}=\binom{pv's}{i}=\binom{p}{i}+\binom{v'}{i}+\binom{s}{i},
\]
and the equality $\binom{u}{i}=\binom{v}{i}$ yields $\binom{u'}{i}=\binom{v'}{i}$. Moreover, for every $i,j\in\{1,2\}$ (possibly equal), we have:
\[
    \begin{array}{rcl}
        \ds\binom{u}{ij}=\binom{pu's}{ij} &=& \ds\binom{p}{ij}+\binom{p}{i}\binom{u's}{j}+\binom{u's}{ij}\\
        \\
        &=&\ds\binom{p}{ij}+\binom{p}{i}\left[\binom{u'}{j}+\binom{s}{j}\right]+\binom{u'}{ij}+\binom{u'}{i}\binom{s}{j}+\binom{s}{ij}.
    \end{array}
\]
Indeed, $ij$ is a scattered factor of $pu's$ if and only if one of the following holds: 1) $ij$ is a scattered factor of $p$, 2) $ij$ is a scattered factor of $u's$, 3) $i$ occurs in $p$ and $j$ occurs in $u's$. Therefore, the number of occurrences of $ij$ in $pu's$ as a scattered factor equals its number of occurrences in $p$ and $u's$ as a scattered factor, plus the product of the number of occurrences of $i$ in $p$ and the number of occurrences of $j$ in $u's$. This relation is a particular case of a more general identity that is the analogue, for binomial coefficients of words, of the Chu--Vandermonde identity; see for example \cite[Proposition~1]{RS15}. The binomial coefficient $\binom{u's}{ij}$ has been computed similarly. Symmetrically, we also have:
\[
    \binom{v}{ij}=\binom{p}{ij}+\binom{p}{i}\left[\binom{v'}{j}+\binom{s}{j}\right]+\binom{v'}{ij}+\binom{v'}{i}\binom{s}{j}+\binom{s}{ij}.
\]
Since $\binom{u}{ij}=\binom{v}{ij}$, $\binom{u'}{i}=\binom{v'}{i}$, and $\binom{u'}{j}=\binom{v'}{j}$, the two previous relations yield $\binom{u'}{ij}=\binom{v'}{ij}$. We have proved that, for every $x\in\{1,2\}^{\leq 2}$, $\binom{u'}{x}=\binom{v'}{x}$, \emph{i.e.}, $u'$ and $v'$ are $2$-binomially equivalent. In particular, note that $|u'|=|v'|\geq 2$. Indeed, they must be equally long because they are $2$-binomially equivalent, and they cannot be reduced to a single letter since their first letters differ and they have the same number of occurrences of each letter.

Up to renaming the letters if necessary, it suffices to consider the following two cases (recall that $|u'|=|v'|\geq 2$):

\begin{itemize}
    \vspace{-0.3cm}\item[-] Case 1: $u'=1u''1$ and $v'=2v''2$,
    \vspace{-0.3cm}\item[-] Case 2: $u'=1u''2$ and $v'=2v''1$.
\end{itemize}
\vspace{-0.3cm}
In the first case, we have $|u''|_2-|v''|_2=|u'|_2-(|v'|_2-2)=2$, and $w$ is not $1$-balanced. We now consider the second case. Since $m:=|u'|_1=|v'|_1\geq 1$, there exist $2m$ integers $k_1,\ldots,k_m,l_1,\ldots,l_m\in\N$ with $k_m\geq 1$ and $l_1\geq 1$, such that
\[
    u'=12^{k_1}12^{k_2}1\ldots 12^{k_m} \hspace{0.5cm}\text{ and }\hspace{0.5cm} v'=2^{l_1}12^{l_2}1\ldots 12^{l_m}1.
\]
Using this decomposition of $u'$ and $v'$, we can compute $\binom{u'}{12}$ and $\binom{v'}{12}$ as follows. Since
\[
    \begin{array}{rcl}
        \ds\binom{u'}{12} = \binom{1 2^{k_1} 1 2^{k_2} 1\ldots 1 2^{k_m}}{12} &=& \ds\binom{1}{1}\binom{2^{k_1} 1 2^{k_2} 1\ldots 1 2^{k_m}}{2}+\binom{2^{k_1} 1 2^{k_2} 1\ldots 1 2^{k_m}}{12}\\
        \\
        &=& \ds\Big(\sum\limits_{i=1}^m k_i\Big)+\binom{ 1 2^{k_2} 1\ldots 1 2^{k_m}}{12},
    \end{array}
\]
we obtain inductively:
\[
    \binom{u'}{12}=\Big(\sum\limits_{i=1}^{m}k_i\Big)+\Big(\sum\limits_{i=2}^{m}k_i\Big)+\ldots+\Big(\sum\limits_{i=m}^{m}k_i\Big),
\]
that we rewrite:
\[
    \binom{u'}{12}=\ds\Big(\sum\limits_{i=1}^{m-1}k_i\Big)+\Big(\sum\limits_{i=2}^{m-1}k_i\Big)+\ldots+\Big(\sum\limits_{i=m-1}^{m-1}k_i\Big)+mk_m \;\;=\;\; \sum\limits_{j=1}^{m-1}\Big(\sum\limits_{i=j}^{m-1} k_i\Big)+mk_m.
\]
Similarly:
\[
    \binom{v'}{12}=\Big(\sum\limits_{i=2}^{m}l_i\Big)+\Big(\sum\limits_{i=3}^{m}l_i\Big)+\ldots+\Big(\sum\limits_{i=m}^{m}l_i\Big) =\sum\limits_{j=2}^{m}\Big(\sum\limits_{i=j}^m l_i\Big)=\sum\limits_{j=1}^{m-1}\Big(\sum\limits_{i=j+1}^m l_i\Big),
\]
and we eventually obtain:
\[
    \begin{array}{rcl}
        \ds\binom{u'}{12}=\binom{v'}{12} &\Longrightarrow& \ds mk_m=\sum\limits_{j=1}^{m-1}\left[\Big(\sum\limits_{i=j+1}^m l_i\Big)-\Big(\sum\limits_{i=j}^{m-1} k_i\Big)\right].
    \end{array}
\]
We distinguish two subcases:
\begin{itemize}
    \vspace{-0.3cm}
    \item[-] Case 2a: For every $j\in\{1,\ldots,m-1\}$, $\ds\Big(\sum\limits_{i=j+1}^m l_i\Big)-\Big(\sum\limits_{i=j}^{m-1} k_i\Big)\leq 1$.
    \vspace{-0.3cm}
    \item[-] Case 2b: There exists $j\in\{1,\ldots,m-1\}$ such that $\ds\Big(\sum\limits_{i=j+1}^m l_i\Big)-\Big(\sum\limits_{i=j}^{m-1} k_i\Big)\geq 2$.
\end{itemize}
\vspace{-0.3cm}
Case 2a cannot occur. Indeed, since $k_m\geq 1$, we would have the following contradiction:
\[
    m\leq mk_m=\sum\limits_{j=1}^{m-1}\left[\Big(\sum\limits_{i=j+1}^m l_i\Big)-\Big(\sum\limits_{i=j}^{m-1} k_i\Big)\right]\leq \sum\limits_{j=1}^{m-1} 1=m-1.
\]
In case 2b, consider the factors $x=12^{k_j}1\ldots 12^{k_{m-1}}1\in\lan(u')$ and $y=2^{l_{j+1}}1\ldots 12^{l_m}\in\lan(v')$. These satisfy $|x|_1=|y|_1+2$ and $|y|_2\geq |x|_2+2$, hence $|y|\geq |x|$. Therefore, any factor $y'$ of $y$ of length $|x|$ satisfies $|x|_1-|y'|_1\geq|x|_1-|y|_1=2$, which shows that $w$ is not $1$-balanced.
\end{proof}

\section{Proof of Lemma~\ref{lemma:projections_kbin} and of Theorem~\ref{th:main}, assertions (i)--(iii)}\label{sec:binary_projections}

We begin by stating and proving (since the proof is short) a known reconstruction lemma (see \cite[Lemma~6.2.19]{Loth97}).

\begin{lemma}\label{lemma:reconstruction}
Let $d\geq 2$ and $u\in\{1,\ldots,d\}^*$. The word $u$ is uniquely determined by the set of its binary projections $\{\pi_{i,j}(u)\;|\;i,j\in\{1,\ldots,d\} \text{ and }i\neq j\}$.
\end{lemma}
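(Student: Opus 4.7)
My plan is to prove the statement by induction on the length $|u|$, with the empty word as the trivial base case (there is only one word whose binary projections are all empty). For the inductive step I will show that the first letter $u[0]$ can be recovered from the family $\{\pi_{i,j}(u)\}_{i\neq j}$, and then reduce to the strictly shorter word $u[1:|u|-1]$.

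The central step is the identification of $u[0]$. First, the support $L:=\{i\in\{1,\ldots,d\}:|u|_i>0\}$ is readable from the data, since $i\in L$ exactly when the letter $i$ occurs in some (equivalently, any) projection $\pi_{i,j}(u)$ with $j\neq i$. I then claim that $u[0]$ is the unique letter $a\in L$ satisfying: for every $b\in L\setminus\{a\}$, the projection $\pi_{a,b}(u)$ begins with $a$. Existence follows by taking $a:=u[0]$, since $\pi_{a,b}(u)$ retains precisely the occurrences in $u$ of $a$ and $b$, and its first letter is therefore the leftmost such occurrence, namely $a$ itself. Uniqueness follows because any $a'\in L$ distinct from $u[0]$ fails the criterion: choosing $b:=u[0]\in L\setminus\{a'\}$, the projection $\pi_{a',b}(u)$ begins with $b$, not with $a'$.

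Once $a:=u[0]$ has been identified, the binary projections of $u':=u[1:|u|-1]$ are directly computable from those of $u$: for every pair $i\neq j$ with $a\notin\{i,j\}$ we have $\pi_{i,j}(u')=\pi_{i,j}(u)$, and if $a\in\{i,j\}$ then $\pi_{i,j}(u')$ is obtained by deleting the initial letter of $\pi_{i,j}(u)$. The induction hypothesis then recovers $u'$ uniquely, and we conclude that $u=a\cdot u'$ is uniquely determined. I do not foresee any serious obstacle here: the entire argument reduces to the short verification above that the proposed criterion singles out exactly $u[0]$ and no other letter of $L$.
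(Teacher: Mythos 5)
Your proof is correct, and it takes a somewhat different route from the paper's. The paper argues by contradiction: it supposes a second word $v\neq u$ has the same binary projections, decomposes $u=piu'$ and $v=pjv'$ at the first position of disagreement (with $i\neq j$), and observes that $\pi_{i,j}(u)$ begins with $\pi_{i,j}(p)\,i$ while $\pi_{i,j}(v)$ begins with $\pi_{i,j}(p)\,j$, forcing $i=j$. You instead reconstruct $u$ letter by letter by induction on $|u|$; your criterion for identifying $u[0]$ (the unique $a$ in the support $L$ such that $\pi_{a,b}(u)$ begins with $a$ for every $b\in L\setminus\{a\}$) is verified correctly in both directions, including the degenerate case $L=\{a\}$, where the condition is vacuous and the multiplicity of $a$ is read off from any projection $\pi_{a,j}(u)$ with $j\neq a$ (such a $j$ exists since $d\geq 2$). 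Both arguments rest on the same core observation — the projection onto the pair of letters involved at a given position records that position's letter in the correct relative order — but yours is constructive and yields an explicit reconstruction algorithm, at the cost of being a little longer. A small additional benefit: the paper's decomposition into $u=piu'$, $v=pjv'$ tacitly assumes neither word is a proper prefix of the other (which is justified there because equal projections force equal letter counts, hence equal lengths, but is not spelled out), whereas your induction never needs to compare two candidate words at all.
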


\begin{proof}
We argue by contradiction and assume that there exists $v\in\{1,\ldots,d\}^*$ such that $v\neq u$ and yet $\pi_{i,j}(v)=\pi_{i,j}(u)$ for every pair of distinct letters $i,j\in\{1,\ldots,d\}$. Since $v\neq u$, there must exist $p,u',v'\in\{1,\ldots,d\}^*$ and two distinct letters $i,j\in\{1,\ldots,d\}$ such that $u=piu'$ and $v=pjv'$. However, the equality $\pi_{i,j}(v)=\pi_{i,j}(u)$ implies that $\pi_{i,j}(p)i\pi_{i,j}(v')=\pi_{i,j}(p)j\pi_{i,j}(u')$, which leads to the contradiction $i=j$. Therefore, $u$ and $v$ must be equal, completing the proof.
\end{proof}

\begin{proof}[Proof of Lemma~\ref{lemma:projections_kbin}]
Let $w\in\{1,\ldots,d\}^\N$ with $d\geq 2$, and assume that there exists $k\in\N_{>0}$ such that, for every distinct letters $i,j\in\{1,\ldots,d\}$, the $k$-binomial complexity of $\pi_{i,j}(w)$ coincides with its subword complexity. We aim to prove that $w$ fulfills the same property, \emph{i.e.}, that every factor of $w$ is alone in its $k$-binomial equivalence class. Let $u,v\in\lan(w)$ be two factors of $w$ such that $u\sim_{k}v$. Our goal is to prove that $u=v$.

First, for every distinct letters $i,j\in\{1,\ldots,d\}$, and for every finite word $x\in\{i,j\}^{\leq k}\subseteq\{1,\ldots,d\}^{\leq k}$, we have $\binom{u}{x}=\binom{v}{x}$. Moreover, in such a case we also have
\begin{equation}\label{eq:projections}
    \binom{\pi_{i,j}(u)}{x}=\binom{u}{x} \hspace{0.3cm}\text{ and }\hspace{0.3cm} \binom{\pi_{i,j}(v)}{x}=\binom{v}{x}.
\end{equation}
Indeed, the number of occurrences of $x\in\{i,j\}^*$ in $u$ (resp. $v$), as a scattered subword, does not depend on how many letters from $\{1,\ldots,d\}\setminus\{i,j\}$ appear in $u$ (resp. $v$), nor on their positions.

Gathering these relations, we have shown that for every $x\in\{i,j\}^{\leq k}$, $\binom{\pi_{i,j}(u)}{x}=\binom{\pi_{i,j}(v)}{x}$, \emph{i.e.}, $\pi_{i,j}(u)\sim_k\pi_{i,j}(v)$. Since both projections $\pi_{i,j}(u)$ and $\pi_{i,j}(v)$ are factors of $\pi_{i,j}(w)$, and since $b_{\pi_{i,j}(w)}^k=p_{\pi_{i,j}(w)}$, it follows that $\pi_{i,j}(u)=\pi_{i,j}(v)$. As this equality holds for every pair of distinct letters $i,j\in\{1,\ldots,d\}$, Lemma~\ref{lemma:reconstruction} allows us to conclude that $u=v$.
\end{proof}

The proof of Theorem~\ref{th:main}, statement (i) relies on the following lemma.

\begin{lemma}\label{lemma:c_balanced}
Let $d\geq 2$ and $w\in\{1,\ldots,d\}^\N$. If $w$ is $c$-balanced, then for every pair of distinct letters $i,j\in\{1,\ldots,d\}$, the binary projection $\pi_{i,j}(w)$ is also $c$-balanced.
\end{lemma}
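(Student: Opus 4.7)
Given two equal-length factors $u',v'\in\lan_n(\pi_{i,j}(w))$, the goal is to show that $\bigl||u'|_a-|v'|_a\bigr|\leq c$ for each $a\in\{i,j\}$. My strategy is to lift $u'$ and $v'$ back to factors of $w$, equalize their lengths by extending the shorter lift, and then invoke the $c$-balancedness of $w$.

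First, I would lift $u'$ to a factor $u$ of $w$ by locating any occurrence of $u'$ in $\pi_{i,j}(w)$ and reading the corresponding window in $w$; after trimming any leading or trailing letters not in $\{i,j\}$, the lift satisfies $|u|_a=|u'|_a$ for $a\in\{i,j\}$. I lift $v'$ to $v$ similarly. Note that $|u|$ and $|v|$ need not coincide, since the intervening letters outside $\{i,j\}$ that appear between consecutive $\{i,j\}$-letters may differ. WLOG assume $|u|\leq|v|$; since $w$ is right-infinite, $u$ can be extended to the right inside $w$ to a factor $\tilde u$ of length exactly $|v|$.

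Extending only adds letters, so $|\tilde u|_a\geq|u|_a$ for every letter $a$. Applying the $c$-balancedness of $w$ to the equal-length factors $\tilde u$ and $v$ gives $|\tilde u|_a-|v|_a\leq c$, and hence
\[
    |u'|_a-|v'|_a \;=\; |u|_a-|v|_a \;\leq\; |\tilde u|_a-|v|_a \;\leq\; c
\]
for every $a\in\{i,j\}$. For the reverse inequality, I would invoke the identity $|u'|_i+|u'|_j=n=|v'|_i+|v'|_j$, which rewrites as $|v'|_i-|u'|_i=|u'|_j-|v'|_j$, and is therefore bounded by $c$ thanks to the previous display applied with $a=j$; symmetrically for $j$. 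Since $\pi_{i,j}(w)$ is a word over $\{i,j\}$, this establishes its $c$-balancedness.

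The main subtlety is recognizing that the extension argument only provides a one-sided bound on $|u|_a-|v|_a$ (extending is monotone in letter counts, so it can only enlarge $|\tilde u|_a$ relative to $|u|_a$). It is precisely the length constraint $|u'|=|v'|$ on the projections, together with the binary nature of the target alphabet $\{i,j\}$, that promotes this one-sided control into the full two-sided inequality $\bigl||u'|_a-|v'|_a\bigr|\leq c$. Beyond this observation, no technical difficulty is expected since both the lifting and the rightward extension are always possible in a right-infinite word.
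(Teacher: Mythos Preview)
Your proof is correct and follows essentially the same route as the paper's: lift the two equal-length factors of $\pi_{i,j}(w)$ to factors of $w$, equalize their lengths, apply the $c$-balancedness of $w$ to get a one-sided bound, and then use the binary identity $|u'|_i-|v'|_i=|v'|_j-|u'|_j$ to obtain the two-sided bound. The only cosmetic differences are that the paper argues by contradiction and \emph{truncates} the longer lift (writing $v'=ps$ with $|p|=|u'|$) whereas you argue directly and \emph{extend} the shorter lift to the right; both maneuvers exploit the same monotonicity of letter counts.
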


\begin{proof}
We proceed by contradiction and assume that there exist two distinct letters $i,j\in\{1,\ldots,d\}$ such that $\pi_{i,j}(w)$ is not $c$-balanced. Then there exist two equally long factors $u,v\in\lan(\pi_{i,j}(w))$ and a letter $a\in\{i,j\}$ such that $|\,|u|_a-|v|_a|\geq c+1$. Moreover, since $\pi_{i,j}(w)$ is a binary word, and since $u$ and $v$ have the same length, we have
\[
    |u|_i-|v|_i=|v|_j-|u|_j,
\]
and the previous inequality holds for both $a=i$ and $a=j$. Without loss of generality, we assume that the letter $i$ has more occurrences in $u$ than in $v$, so (the letter $j$ occurs more in $v$ than $u$ and)
\[
    |u|_i-|v|_i=|v|_j-|u|_j\geq c+1.
\]

We now consider factors $u',v'\in\lan(w)$ of $w$ such that $\pi_{i,j}(u')=u$ and $\pi_{i,j}(v')=v$. If $|u'|\leq |v'|$, then writing $v'=ps$ with $|p|=|u'|$, we have
\[
    |u'|_{i}-|p|_{i}\geq |u'|_i-|v'|_i=|\pi_{i,j}(u')|_i-|\pi_{i,j}(v')|_i\geq c+1,
\]
which contradicts the $c$-balancedness of $w$. Symmetrically, the case $|u'|>|v'|$ leads to a similar contradiction.
\end{proof}

\begin{proof}[Proof of Theorem~\ref{th:main}, (i)]
Lemma~\ref{lemma:c_balanced} implies that all the binary projections of any $d$-ary $1$-balanced word are $1$-balanced. The result then follows immediately from Lemma~\ref{lemma:binary_projections}.
\end{proof}

The proof of Theorem~\ref{th:main}, statement (ii) comes from the following characterization of words with subword complexity $n\in\N_{>0}\mapsto n+(d-1)$. We refer the reader to \cite[Lemma~1 and Lemma~4]{FM97} for a proof of this result.

\begin{lemma}[Ferenczi, Mauduit, 97]\label{lemma:minimal_complexity}
Let $\A$ be a $d$-ary alphabet with $d\geq 3$, and let $w\in\A^\N$ be a word with subword complexity $n\in\N_{>0}\mapsto n+(d-1)$.\\
\emph{(i)} If $w$ is recurrent (\emph{i.e.}, every factor of $w$ occurs infinitely often in $w$), then there exist:
\begin{itemize}
    \vspace{-0.3cm}
    \item[-] a Sturmian word $w_0\in\{1,2\}^\N$,
    \vspace{-0.3cm}
    \item[-] a partition $\A=\mathcal{B}\sqcup\mathcal{C}\sqcup\mathcal{D}$ where $\mathcal{B}=\{b_1,\ldots,b_{N_{\mathcal{B}}}\}$, $\mathcal{C}=\{c_1,\ldots,c_{N_{\mathcal{C}}}\}$ and $\mathcal{D}=\{d_1,\ldots,d_{N_{\mathcal{D}}}\}$, with both $\mathcal{B}\neq\emptyset$ and $\mathcal{C}\sqcup\mathcal{D}\neq\emptyset$,
\end{itemize}
\vspace{-0.2cm}
such that $pw=\sigma(w_0)$, where $\sigma:\{1,2\}\to\A^*$ is the substitution
\[
    1\mapsto b_1\ldots b_{N_{\mathcal{B}}}c_1\ldots c_{N_{\mathcal{C}}}, \hspace{0.5cm} 2\mapsto b_1\ldots b_{N_{\mathcal{B}}}d_1\ldots d_{N_{\mathcal{D}}},
\]
and where $p$ is a (possibly empty) prefix of $\sigma(1)$ or $\sigma(2)$.\\
\emph{(ii)} If $w$ is not recurrent, then there exist $1\leq d'<d$, a $d'$-letter subalphabet $\mathcal{B}\subset\A$, and a recurrent word $w_0\in\mathcal{B}^\N$ with subword complexity $n\in\N_{>0}\mapsto n+(d'-1)$ such that
\[
    w=a_1\ldots a_{d-d'} w_0,
\]
where the letters $a_i$ are the $d-d'$ distinct elements of $\A\setminus\mathcal{B}$.
\end{lemma}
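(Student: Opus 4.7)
The plan is to analyze the structure of $w$ via its Rauzy graphs $G_n$, whose vertices are the factors in $\lan_n(w)$ and whose edges correspond to the factors in $\lan_{n+1}(w)$. The complexity hypothesis $p_w(n) = n + (d-1)$ yields the first differences $p_w(1) - p_w(0) = d-1$ and $p_w(n+1) - p_w(n) = 1$ for all $n \geq 1$. Using the classical identity $p_w(n+1) - p_w(n) = \sum_{u \in \lan_n(w)}(r(u) - 1)$, where $r(u)$ denotes the number of right-extensions of $u$, I deduce that for every $n \geq 1$ there is a unique right-special factor in $\lan_n(w)$ and that it has exactly two right-extensions, while every other factor of length $n$ has a unique right-extension. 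A symmetric argument, applied to left-extensions, gives the analogous statement for left-special factors (after restricting to the recurrent part of $w$ if needed).

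For the recurrent case (i), each $G_n$ is strongly connected and has exactly one vertex of out-degree $2$ and one vertex of in-degree $2$; an elementary graph analysis forces $G_n$ to be the union of two directed cycles sharing a common directed arc, namely a single path from the unique left-special factor to the unique right-special factor, together with two internally disjoint return paths. The main obstacle is to show that this shape is \emph{compatible as $n$ grows}: the bispecial factors must be nested, the common arc produces a common block of letters $\mathcal{B} = \{b_1, \ldots, b_{N_\mathcal{B}}\}$, and the two disjoint return paths produce blocks indexed by disjoint sets of letters $\mathcal{C} = \{c_1, \ldots, c_{N_\mathcal{C}}\}$ and $\mathcal{D} = \{d_1, \ldots, d_{N_\mathcal{D}}\}$ --- if any letter appeared on both non-shared return paths, an additional right-special factor would arise and contradict the complexity formula. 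Coding each full return by $1$ or $2$ according to which branch is followed produces a binary word $w_0 \in \{1,2\}^\N$, and the identity $p w = \sigma(w_0)$ holds with $p$ a prefix of $\sigma(1)$ or $\sigma(2)$ compensating for the fact that $w$ may begin in the middle of a block. Finally, by minimality of the complexity, $w_0$ cannot be eventually periodic and must itself have subword complexity $n+1$, so $w_0$ is Sturmian.

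For the non-recurrent case (ii), let $\A' \subseteq \A$ be the subalphabet of letters occurring infinitely often in $w$ and set $d' := |\A'|$; the non-recurrence gives $d' < d$. Let $w_0 \in (\A')^\N$ be the infinite tail obtained after the last occurrence of a transient letter; then $w_0$ is recurrent by construction. A direct comparison of $\lan_n(w)$ and $\lan_n(w_0)$, tracking how each transient letter contributes to the factors of $w$ that straddle it, gives $p_{w_0}(n) = n + (d'-1)$. The condition $p_w(n+1) - p_w(n) = 1$ for every $n \geq 1$ then prevents any transient letter from appearing more than once in $w$: two occurrences of a single transient letter would create two distinct right-special factors of some length by interpolation between them, violating uniqueness. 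Hence the $d - d'$ transient letters are all distinct and occur once, as an initial block $a_1 \ldots a_{d-d'}$, giving the decomposition $w = a_1 \ldots a_{d-d'}\, w_0$.
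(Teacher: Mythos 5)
First, a point of reference: the paper does not prove this lemma at all — it is quoted from Ferenczi and Mauduit and the reader is referred to \cite[Lemma~1 and Lemma~4]{FM97} — so there is no internal proof to compare against. Your Rauzy-graph strategy is the standard one for statements of this type, and the opening steps are correct: the complexity hypothesis gives exactly one right-special factor of each length $n\geq 1$, with exactly two right extensions, and (in the recurrent case) one left-special factor, so each $G_n$ is two directed cycles glued along a common path. The problem is that the genuinely substantive steps are then asserted rather than proved. In (i) you name ``compatibility as $n$ grows'' as the main obstacle and never address it; in fact, since the formula $p_w(n)=n+(d-1)$ is assumed from $n=1$ on (so $p_w(2)-p_w(1)=1$), the graph $G_1$ alone already yields the partition $\mathcal{A}=\mathcal{B}\sqcup\mathcal{C}\sqcup\mathcal{D}$ and the factorization $pw=\sigma(w_0)$, because every one-sided infinite walk in such a graph decomposes into the two cycle words after an initial partial block; no induction on $n$ is needed for this part. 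What genuinely requires an argument, and is missing, is the Sturmianity of $w_0$: ``minimality of the complexity'' does not give $p_{w_0}(n)=n+1$ by itself. You must lift a right-special factor of $w_0$ to a right-special factor of $w$ (using that $b_1$ occurs exactly once per block, so occurrences of $\sigma(u)$ beginning with $b_1$ are synchronized with block boundaries) and then invoke the uniqueness of right-special factors of $w$.

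The gaps in (ii) are more serious. The claim that ``$w_0$ is recurrent by construction'' is unjustified: a word in which every letter occurs infinitely often need not be recurrent, so recurrence of the tail has to be extracted from the complexity hypothesis, not from the construction. Likewise, the mechanism you propose for showing that each transient letter occurs only once --- two occurrences would ``create two distinct right-special factors of some length by interpolation'' --- is not substantiated, and I do not see how to make it work as stated. The step can be repaired by counting rather than by exhibiting special factors: let $N$ be the length of the shortest prefix of $w$ containing every occurrence of every transient letter; then $p_w(n)\leq p_{w_0}(n)+N$ for all $n$, and $p_w(n)\geq p_{w_0}(n)+N$ for large $n$ because the $N$ straddling factors each contain a transient letter and are eventually pairwise distinct ($w$ is not eventually periodic). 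Since $w_0$ is aperiodic its complexity is strictly increasing, so $p_{w_0}(n)\geq p_{w_0}(1)+(n-1)=n+d'-1$, and comparing with $p_w(n)=n+d-1$ forces $N=d-d'$; the $d-d'$ distinct transient letters must then exactly fill the first $N$ positions, each occurring once, after which $p_{w_0}(n)=n+d'-1$ for all $n\geq 1$ follows. As written, your proposal has the right architecture but leaves precisely the nontrivial verifications --- Sturmianity of $w_0$ in (i), recurrence of the tail and the single-occurrence property in (ii) --- either unproved or supported by incorrect reasoning.
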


This characterization has the following consequence.

\begin{lemma}\label{lemma:minimal_complexityb}
Let $d\geq 3$ and let $w$ be a $d$-ary word. If $p_w(n)=n+(d-1)$ for every $n\in\N_{>0}$, then all the binary projections of $w$ are $1$-balanced.
\end{lemma}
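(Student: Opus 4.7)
The plan is to apply the Ferenczi--Mauduit characterization (Lemma~\ref{lemma:minimal_complexity}) and argue separately in its two cases, inducting on $d$ for the non-recurrent one.

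First, if $w$ is recurrent, I would work with the representation $p\cdot w = \sigma(w_0)$ where $w_0 \in \{1,2\}^\N$ is Sturmian and $\sigma$ is the substitution $1 \mapsto b_1\ldots b_{N_\mathcal{B}}c_1\ldots c_{N_\mathcal{C}}$, $2 \mapsto b_1\ldots b_{N_\mathcal{B}} d_1\ldots d_{N_\mathcal{D}}$ associated to a partition $\mathcal{A} = \mathcal{B} \sqcup \mathcal{C} \sqcup \mathcal{D}$. Since $\sigma(w_0)$ is uniformly recurrent (as the non-erasing morphic image of a uniformly recurrent word) and $1$-balancedness depends only on the language, I may work directly with $\sigma(w_0)$. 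For each pair of distinct letters $i,j \in \mathcal{A}$, I would analyze $\pi_{i,j}(\sigma(w_0))$ according to where $i$ and $j$ sit in the partition:
\begin{itemize}
\item if both lie in the same part, then $\pi_{i,j}(\sigma(1))$ and $\pi_{i,j}(\sigma(2))$ are either equal (both in $\mathcal{B}$) or exactly one is empty while the other has length two (both in $\mathcal{C}$ or both in $\mathcal{D}$); in all these situations $\pi_{i,j}(\sigma(w_0))$ is eventually periodic of period $2$;
\item if one letter lies in $\mathcal{C}$ and the other in $\mathcal{D}$, then $\sigma(1)$ and $\sigma(2)$ each project to a single, distinct letter, so $\pi_{i,j}(\sigma(w_0))$ is simply a relabeling of the Sturmian word $w_0$;
\item if one letter lies in $\mathcal{B}$ and the other in $\mathcal{C} \sqcup \mathcal{D}$, then $\pi_{i,j}(\sigma(w_0))$ is the image of $w_0$ under a substitution of the form $1 \mapsto xy$, $2 \mapsto x$ (or a symmetric variant), where $x$ denotes the letter in $\mathcal{B}$; such substitutions are Sturmian morphisms, so the image of $w_0$ is again Sturmian.
\end{itemize}
In every case, $\pi_{i,j}(\sigma(w_0))$ is $1$-balanced.

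Second, if $w$ is not recurrent, the characterization gives $w = a_1\ldots a_{d-d'} w_0$ with $2 \leq d' < d$ and $w_0$ a recurrent $d'$-ary word satisfying $p_{w_0}(n) = n+(d'-1)$. Proceeding by induction on $d$, I can assume that all binary projections of $w_0$ are $1$-balanced (this follows from the lemma applied to $w_0$ when $d' \geq 3$, and from the fact that $w_0$ is then Sturmian when $d' = 2$). For distinct letters $i,j \in \mathcal{A}$, three sub-cases arise: if both lie in $\mathcal{B}$, then $\pi_{i,j}(w) = \pi_{i,j}(w_0)$ and we are done; if exactly one lies in $\mathcal{B}$, then $\pi_{i,j}(w_0)$ is a constant word while the prefix contributes a single occurrence of the other letter, yielding a word of the form $j \cdot i^\omega$ (or a variant), which is visibly $1$-balanced; and if neither lies in $\mathcal{B}$, then $\pi_{i,j}(w)$ is a finite word of length two, trivially $1$-balanced.

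The main technical obstacle is the third sub-case of the recurrent analysis: verifying that substitutions of the form $1 \mapsto xy$, $2 \mapsto x$ map Sturmian words to Sturmian (or at least $1$-balanced) words. I would handle this by appealing to the standard theory of Sturmian morphisms; alternatively, a direct combinatorial argument goes through by writing any two equally long factors of the image as $s_1 \sigma(u^*) s_2$ and $t_1 \sigma(v^*) t_2$ with $s_k, t_k$ being suffixes or prefixes of block images of length at most one, expressing the $i$- and $j$-counts in terms of $|u^*|_1$, $|u^*|_2$ and those truncations, and then invoking the $1$-balance of $w_0$ itself.
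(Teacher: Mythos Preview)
Your proposal is correct and follows essentially the same route as the paper: both split into the recurrent and non-recurrent cases of the Ferenczi--Mauduit characterization, analyze the binary projections according to how the two letters sit in the partition $\mathcal{B}\sqcup\mathcal{C}\sqcup\mathcal{D}$, and in the key mixed case $\mathcal{B}/(\mathcal{C}\sqcup\mathcal{D})$ invoke that the relevant substitution is a Sturmian morphism. Your treatment is slightly more careful in two places---you explicitly justify passing from $w$ to $\sigma(w_0)$ via uniform recurrence, and you handle the non-recurrent case by induction on $d$ (distinguishing $d'=2$)---but the argument is the same.
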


\begin{proof}
Let $w\in\A^\N$ be a $d$-ary word with subword complexity $p_w(n)=n+(d-1)$ for every $n\geq 1$. We distinguish two cases.

\medskip

\emph{Case 1. The word $w$ is recurrent.} We write $w$ as in Lemma~\ref{lemma:minimal_complexity}, item (i). Our goal is to prove that, for every pair of distinct letters $a,b\in\A$, the projection $\pi_{a,b}(w)$ is $1$-balanced.

First, if $a$ and $b$ belong to the same subalphabet $\mathcal{B}$, $\mathcal{C}$, or $\mathcal{D}$, then $\pi_{a,b}(w)$ is purely periodic, with period $ab$ or $ba$. Indeed, $\pi_{a,b}(\sigma(1))$ and $\pi_{a,b}(\sigma(2))$ are equal to $ab$, $ba$, or the empty word $\epsilon$, depending on the subalphabet (note that $ab$ and $ba$ cannot both occur). In particular, $\pi_{a,b}(w)$ is $1$-balanced.

Secondly, if $a,b\in\mathcal{C}\sqcup\mathcal{D}$ but belong to different subalphabets---say $a\in\mathcal{C}$ and $b\in\mathcal{D}$---then $\pi_{a,b}(\sigma(1))=a$ and $\pi_{a,b}(\sigma(2))=b$. Thus, up to replace the letters $1$ by $a$ and the letters $2$ by $b$ in $w_0$, we have $\pi_{a,b}(w)=w_0$ or $S(w_0)$, where $S$ is the shift operator acting on infinite words ($S(w)[n]=w[n+1]$). Consequently $\pi_{a,b}(w)$ is Sturmian and, in particular, $1$-balanced.

Finally, if $a\in\mathcal{B}$ and $b\in\mathcal{C}\sqcup\mathcal{D}$---say $b\in\mathcal{C}$---then $\pi_{a,b}(\sigma(1))=ab$ and $\pi_{a,b}(\sigma(2))=a$. In this case, $\pi_{a,b}(w)=\tau(w_0)$ or $S(\tau(w_0))$ or $\tau(S(w_0))$, where $\tau:\{1,2\}\to\{a,b\}^*$ is the substitution defined by $\tau(1)=ab$ and $\tau(2)=a$. It is well known that such a substitution maps Sturmian words to Sturmian words, see \cite[Chapter~2, Section~2.3]{Loth02}. Thus, $\pi_{a,b}(w)$ is $1$-balanced.

\medskip

\emph{Case 2. The word $w$ is not recurrent.} According to Lemma~\ref{lemma:minimal_complexity}, we have $w=a_1\ldots a_{d-d'}w_0$, where $w_0\in\B^\N$ is a recurrent word whose subword complexity is $n\in\N_{>0}\mapsto n+(d'-1)$, and where the letters $a_i\in\A\setminus\B$ are distinct and do not occur in $w_0$. Thus, for every pair of distinct letters $a,b\in\A$, $\pi_{a,b}(w)$ is either equal to $ab$ (both $a$ and $b$ belong to $\A\setminus\B$), or $ab^\omega$ ($a\in\A\setminus\B$ and $b\in\B$), or $\pi_{a,b}(w_0)$ (both $a$ and $b$ belong to $\B$). In the first two cases $\pi_{a,b}(w)$ is clearly $1$-balanced, while the third case has already been treated in \emph{Case 1}.
\end{proof}

\begin{proof}[Proof of Theorem~\ref{th:main}, (ii)]
Let $d\geq 2$ and $w$ be a $d$-ary word with subword complexity $n\in\N_{>0}\mapsto n+(d-1)$. If $d=2$, then $w$ is Sturmian and the result is already known. If $d\geq 3$, then according to Lemma~\ref{lemma:minimal_complexityb} all the binary projections of $w$ are $1$-balanced. The result is then an immediate consequence of Lemma~\ref{lemma:binary_projections}.
\end{proof}

We conclude this section with the proof of Theorem~\ref{th:main}, statement (iii). This result relies on the following two well-known lemmas.

\begin{lemma}\label{lemma:projection_billiard}
Let $d\geq 2$, and let $w\in\{1,\ldots,d\}^\N$ be a hypercubic billiard word in dimension $d$. Then, for every subalphabet $\B\subset\{1,\ldots,d\}$, the projection $\pi_\B(w)$ is a hypercubic billiard word in dimension $\#\B$.
\end{lemma}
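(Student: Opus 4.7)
\medskip

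The plan is to use the standard \emph{unfolding trick}: instead of following the ball as it bounces inside $[0,1]^d$, we lift its trajectory to a straight line in $\mathbb{R}^d$. After the standard identifications on $\mathbb{R}^d / (2\mathbb{Z})^d$ (reflecting each coordinate modulo $2$), the ball's position at time $t\geq 0$ becomes $x+t\theta\in\mathbb{R}^d$, and the event ``the ball hits a hyperface labeled $i$'' translates into ``the $i$-th coordinate $x_i+t\theta_i$ of the trajectory equals an integer''. Consequently, the billiard word $w$ can be read off as the sequence of integer-crossings of the coordinates of $x+t\theta$, indexed in increasing order of $t$, with the $n$-th letter of $w$ being $i$ precisely when it is the $i$-th coordinate that hits an integer at the $n$-th crossing time.

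First, I would make this correspondence rigorous: enumerate the strictly positive crossing times $(t_n)_{n\in\mathbb{N}}$ and the associated indices $(i_n)_{n\in\mathbb{N}}\in\{1,\ldots,d\}^{\mathbb{N}}$, so that $w[n]=i_n$. (This requires a mild non-degeneracy assumption -- namely that no two coordinates cross integers at the same positive time -- which is part of the standing definition of a billiard word; whenever several coordinates cross an integer simultaneously one customarily declares the word undefined, and I would simply inherit this convention.) Second, I would observe that, since the evolution of each coordinate is independent in $\mathbb{R}^d$, the subsequence of crossing times for which $i_n\in\mathcal{B}$ depends only on the data $(x_i,\theta_i)_{i\in\mathcal{B}}$, in exactly the same way that the full sequence depends on $(x,\theta)$.

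Third, by definition of the projection $\pi_{\mathcal{B}}$, the word $\pi_{\mathcal{B}}(w)$ is obtained by keeping only the letters $i_n$ with $i_n\in\mathcal{B}$. By the previous step this is precisely the sequence of integer-crossings of the trajectory $y+t\vartheta$ in $\mathbb{R}^{\#\mathcal{B}}$, where $y:=(x_i)_{i\in\mathcal{B}}$ and $\vartheta:=(\theta_i)_{i\in\mathcal{B}}$. Reapplying the unfolding correspondence in the reverse direction -- in dimension $\#\mathcal{B}$ this time -- yields that $\pi_{\mathcal{B}}(w)$ is the hypercubic billiard word in dimension $\#\mathcal{B}$ generated by initial position $y\in[0,1]^{\#\mathcal{B}}$ and initial momentum $\vartheta\in\mathbb{R}^{\#\mathcal{B}}$, which is what we wanted.

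The main (and essentially only) obstacle is a purely expository one: setting up carefully the unfolding correspondence, including indexing conventions and the treatment of simultaneous crossings, so that the projection step reduces to the trivial observation that selecting the coordinates in $\mathcal{B}$ commutes with unfolding. Once this framework is in place the argument is immediate, and no assumption on the rational (in)dependence of the coordinates of $\theta$ is required.
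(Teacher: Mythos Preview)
Your argument is correct. Both you and the paper exploit the same underlying fact---that the $d$ coordinates of the ball evolve independently---but you package it differently: the paper projects the trajectory geometrically onto a single hyperface (erasing one letter) and then inducts on the dimension, whereas you unfold to a straight line in $\R^d$, read the billiard word as the sequence of integer-crossings of the individual coordinates, and observe directly that restricting to the coordinates in $\B$ yields the unfolded picture in dimension $\#\B$. Your route is slightly more economical (it handles an arbitrary subalphabet $\B$ in one step, without induction) and makes the explicit parameters $(y,\vartheta)=\big((x_i)_{i\in\B},(\theta_i)_{i\in\B}\big)$ of the projected billiard visible; the paper's route stays closer to the physical picture and avoids setting up the unfolding correspondence. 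Either way the content is the same, and your remark that no rational-independence hypothesis on $\theta$ is needed matches the paper's later observation.
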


\begin{proof}[Sketch of proof]
To see this, consider a ball moving inside a $d$-dimensional hypercubic billiard table. The projection of its trajectory onto any hyperface of the table corresponds to the trajectory of a ball moving inside a $(d-1)$-dimensional hypercubic billiard table. Moreover, one can verify that the codings of these two trajectories are related: the second coding is obtained from the first by erasing the letter corresponding to the label of the hyperface onto which the original trajectory is projected. The result then follows by an immediate induction on the dimension $d$.
\end{proof}

The second lemma follows from the characterization of $1$-balanced words by Morse and Hedlund \cite{MH40}. Interested readers may consult \cite[Section~6]{Vui03} and \cite{AV22,AV24} for results concerning the balancedness constants of hypercubic billiard words in arbitrary dimensions.

\begin{lemma}\label{lemma:balance_square_billiard}
Square billiard words are $1$-balanced.
\end{lemma}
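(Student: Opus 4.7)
The plan is to reduce any square billiard word to the coding of a circle rotation with respect to a two-arc partition, and then invoke Morse and Hedlund's characterization of $1$-balanced binary words \cite{MH40}. Consider a square billiard word $w \in \{a,b\}^\N$ generated by a momentum $\theta = (\theta_1,\theta_2)$ (WLOG $\theta_1, \theta_2 > 0$) and an initial position $x = (x_1,x_2) \in [0,1)^2$, where $a$ labels horizontal faces and $b$ vertical faces.

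The first step is to unfold the trajectory via successive reflections across the hit walls. The billiard orbit then becomes a straight line of direction $\theta$ in $\R^2$ starting at $x$, whose wall-hits are exactly the crossings of the integer lines $y = k$ (labeled $a$) and $x = k$ (labeled $b$). Reducing modulo $\Z^2$, this becomes a constant-direction linear flow on the torus $\R^2/\Z^2$, and the word $w$ records the successive visits of the orbit to the cross $\Sigma := ((\R/\Z) \times \{0\}) \cup (\{0\} \times (\R/\Z))$, with label determined by which branch of $\Sigma$ is hit.

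The second step is to parameterize $\Sigma$ as a single circle of circumference $2$ by arc length, cutting at the singular point $(0,0)$. A direct computation shows that the first-return map of the linear flow to $\Sigma$ is an orientation-preserving isometry of this circle, hence a rotation $R_\alpha$ whose angle $\alpha$ depends explicitly on $\theta$. Under this identification, the $a$/$b$-labeling is the partition of $\Sigma$ into its two length-one arcs, so $w$ is the coding of an $R_\alpha$-orbit with respect to a two-arc partition of the circle. By \cite{MH40}, every such coding is $1$-balanced (yielding aperiodic Sturmian words when $\alpha$ is irrational, and purely periodic or skew words when $\alpha$ is rational), which is the desired conclusion.

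The main obstacle is to execute the reduction to a rotation cleanly in the degenerate configurations: trajectories passing through a corner of the square correspond to the singular point of $\Sigma$ and require a convention for the labeling, while trajectories with $\theta_2/\theta_1 \in \Q$ produce a rational angle $\alpha$ and an eventually periodic billiard word. The rational case is exactly covered by the periodic and skew parts of the Morse--Hedlund classification, so requires no extra work beyond the rotation reduction; the corner case affects at most one letter of $w$ and can be handled by either of the two natural labeling conventions. With these points dispatched, the argument goes through in full generality.
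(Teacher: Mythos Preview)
Your overall route---unfold to a straight line, pass to the torus, and recognize the word as the coding of a circle rotation---is exactly what the paper's sketch does (phrased there as ``unfolding $\to$ cutting words $\to$ mechanical words'', with a pointer to \cite[Chapter~2, Section~2.1]{Loth02}). But two of your steps fail as written.

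First, the first-return map of the linear flow to $\Sigma$ is \emph{not} an isometry of the arc-length circle of circumference~$2$ unless $\theta_1=\theta_2$. If, say, $0<\theta_2<\theta_1$, then a point $(s,0)$ on the horizontal branch flows first to the vertical branch at $\bigl(0,(1-s)\,\theta_2/\theta_1\bigr)$; this piece of the return map has derivative of modulus $\theta_2/\theta_1\neq 1$ in arc length. The map is only piecewise affine in your coordinates, so the inference ``orientation-preserving isometry, hence a rotation $R_\alpha$'' does not go through.

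Second, the coding of a rotation $R_\alpha$ by an \emph{arbitrary} two-arc partition is not $1$-balanced in general; the Morse--Hedlund characterization yields a $1$-balanced (mechanical) word only when one arc has length $\alpha$. Two length-one arcs on a circumference-$2$ circle do not satisfy this, and for instance the coding of $R_{\sqrt 2-1}$ on $[0,1)$ by the partition $[0,\tfrac12)\cup[\tfrac12,1)$ already contains the length-$7$ factors $0010100$ and $0101011$.

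Both problems are cured by the parameterization implicit in the standard treatment: give the horizontal branch of $\Sigma$ length $\theta_2$ and the vertical branch length $\theta_1$ (total circumference $\theta_1+\theta_2$; this is the transverse-flux measure for the linear flow). The same computation then shows the return map is the rotation by $\theta_2$, and one arc has length exactly $\theta_2$, so the coding is a mechanical word and hence $1$-balanced. With this correction your argument coincides with the paper's.
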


\begin{proof}[Sketch of Proof]
The usual \emph{unfolding procedure} shows that square billiard words can equivalently be obtained as \emph{cutting words}. The connection between cutting words, mechanical words, codings of rotations and Sturmian words is fully detailed in \cite[Chapter~2, Section~2.1]{Loth02}, see in particular Theorem~2.1.13 and Lemmas~2.1.14 and 2.1.15 in that reference.
\end{proof}

\begin{proof}[Proof of Theorem~\ref{th:main}, (iii)]
Lemmas~\ref{lemma:projection_billiard} and \ref{lemma:balance_square_billiard} imply that all the binary projections of any hypercubic billiard words are $1$-balanced. The result then follows once more from Lemma~\ref{lemma:binary_projections}.
\end{proof}

\section{Proof of Lemma~\ref{lemma:stability} and Theorem~\ref{th:main}, assertion (iv)}\label{sec:stability}

We now prove Lemma~\ref{lemma:stability}, which asserts that words whose $k$-binomial complexity coincides with their subword complexity remain stable under the coloring process described in Section~\ref{sec:def}. As mentioned in Section~\ref{sec:main}, since the $2$-binomial complexity of Sturmian words coincides with their subword complexity, statement (iv) of Theorem~\ref{th:main} follows from this lemma via a straightforward induction.

\begin{proof}[Proof of Lemma~\ref{lemma:stability}]
Let $w_0\in\A^\N$ and $w_1\in\B^\N$ be two infinite words defined over disjoint finite alphabets. Let $a\in\A$ and $w:=\colour(w_0,a,w_1)$ be the coloring of the letter $a$ in $w_0$ by $w_1$. We assume that $b_{w_0}^k=p_{w_0}$ and $b_{w_1}^k=p_{w_1}$ for some $k\in\N_{>0}$ and aim to prove that $b_w^k=p_w$. To this end, consider two factors $u,v\in\lan(w)$ such that $u\sim_k v$. Our goal is to show that $u=v$.

We denote by $\sigma$ the substitution that maps $w$ back to $w_0$:
\[
    \begin{array}{rccl}
        \sigma:&\A\sqcup\B\setminus\{a\}&\longrightarrow&\A\\
        & i & \longmapsto & \begin{cases} i &\text{if $i\in\A$,} \\ a &\text{if $i\in\B$.}\end{cases}
    \end{array}
\]
Clearly, $\sigma(w)=w_0$, $\sigma(u),\sigma(v)\in\lan(w_0)$, $\pi_\B(w)=w_1$ and $\pi_\B(u),\pi_\B(v)\in\lan(w_1)$. Moreover,
\[
    u=\colour(\sigma(u),a,\pi_{\B}(u)) \hspace{0.5cm} \text{ and }\hspace{0.5cm} v=\colour(\sigma(v),a,\pi_\B(v)).
\]
Therefore, to prove that $u=v$ it is suficient to prove that $\pi_{\B}(u)=\pi_\B(v)$ and $\sigma(u)=\sigma(v)$ (it is even equivalent). Thanks to a reasoning similar to that used in the proof of Lemma~\ref{lemma:projections_kbin}, we have $\pi_\B(u)\sim_k\pi_\B(v)$ (see equation \eqref{eq:projections} in Section~\ref{sec:binary_projections}), and then $\pi_\B(u)=\pi_\B(v)$. We now prove that $\sigma(u)=\sigma(v)$. To begin with, we claim that for any $x\in\A^*$
\begin{equation}\label{eq:coloring}
    \binom{\sigma(u)}{x}=\sum\limits_{\substack{y\in(\A\sqcup\B\setminus\{a\})^{|x|} \\ \sigma(y)=x}} \binom{u}{y}.
\end{equation}
We temporarily postpone the proof of this claim. Since $u\sim_k v$, we have $\binom{u}{y}=\binom{v}{y}$ for every $y\in(\A\sqcup\B\setminus\{a\})^{\leq k}$. Combined with \eqref{eq:coloring}, this yields $\binom{\sigma(u)}{x}=\binom{\sigma(v)}{x}$ for every $x\in\A^{\leq k}$, \emph{i.e.}, $\sigma(u)\sim_k\sigma(v)$. Finally, since both $\sigma(u)$ and $\sigma(v)$ belong to $\lan(w_0)$, and since the $k$-binomial complexity of $w_0$ coincides with its subword complexity, it follows that $\sigma(u)=\sigma(v)$.

\medskip

To conclude, it only remains to prove relation \eqref{eq:coloring}. Writing $u=u_1u_2\ldots u_p$, where each $u_i$ is a letter in $\A\sqcup\B\setminus\{a\}$, we have $\sigma(u)=\sigma(u_1)\sigma(u_2)\ldots\sigma(u_p)$ where each $\sigma(u_i)\in\A$ is also letter. Therefore, by definition of binomial coefficients of words, we have
\[
    \begin{array}{c}
        \ds\binom{\sigma(u)}{x}:=\#\big\{(i_1,i_2,\ldots,i_{|x|}) \;\big |\; 1\leq i_1<i_2<\ldots<i_{|x|}\leq p \text{ and } \sigma(u_{i_1})\sigma(u_{i_2})\ldots\sigma(u_{i_{|x|}})=x\big\},\\
        \\
        \ds\binom{u}{y}:=\#\big\{(j_1,j_2,\ldots,j_{|y|}) \;\big |\; 1\leq j_1<j_2<\ldots<j_{|y|}\leq p \text{ and } u_{j_1}u_{j_2}\ldots u_{j_{|y|}}=y\big\}.
    \end{array}
\]
Hence, the identity \eqref{eq:coloring} is an immediate consequence of the set equality
\begin{equation}\label{eq:coloringb}
    E(u,x)=\bigsqcup\limits_{\substack{y\in(\A\sqcup\B\setminus\{a\})^{|x|} \\ \sigma(y)=x}} F(u,y)
\end{equation}
where
\[
    \begin{array}{l}
        E(u,x):=\big\{(i_1,i_2,\ldots,i_{|x|}) \;\big |\; 1\leq i_1<i_2<\ldots<i_{|x|}\leq p \text{ and } \sigma(u_{i_1})\sigma(u_{i_2})\ldots\sigma(u_{i_{|x|}})=x\big\},\\
        F(u,y):=\big\{(j_1,j_2,\ldots,j_{|y|}) \;\big |\; 1\leq j_1<j_2<\ldots<j_{|y|}\leq p \text{ and } u_{j_1}u_{j_2}\ldots u_{j_{|y|}}=y\big\}.
    \end{array}
\]
Let us prove that \eqref{eq:coloringb} holds. First, it is clear that the sets $(F(u,y))_y$ are disjoint. Indeed, if there exist $y,z\in (\A\sqcup\B\setminus\{a\})^{|x|}$ such that $F(u,y)\cap F(u,z)\neq\emptyset$, then there exists $(j_1,\ldots,j_{|x|})$ such that $y=u_{j_1}\ldots u_{j_{|x|}}=z$. Secondly, if $(i_1,\ldots,i_{|x|})\in E(u,x)$, then $\sigma(u_{i_1})\ldots\sigma(u_{i_{|x|}})=x$. Consequently, $y:=u_{i_1}\ldots u_{i_{|x|}}$ belongs to $(\A\sqcup\B\setminus\{a\})^{|x|}$ and satisfies $\sigma(y)=x$. In other words
\[
    (i_1,\ldots,i_{|x|})\in F(u,y)\subseteq \bigsqcup\limits_{\substack{z\in(\A\sqcup\B\setminus\{a\})^{|x|} \\ \sigma(z)=x}} F(u,z).
\]
Finally, if $(j_1,\ldots,j_{|x|})\in F(u,y)$ for some $y\in(\A\sqcup\B\setminus\{a\})^{|x|}$ satisfying $\sigma(y)=x$, then
\[
    x=\sigma(y)=\sigma(u_{j_1}\ldots u_{j_{|x|}})=\sigma(u_{j_1})\ldots\sigma(u_{j_{|x|}}),
\]
which means that $(j_1,\ldots,j_{|x|})$ belongs to $E(u,x)$. This completes the proof.
\end{proof}

\section{Open questions}\label{sec:conclusion}

We established a sufficient condition and a stability result for words whose $2$-binomial complexity is equal to their subword complexity. These two results allowed us to show that several families of words, such as $d$-ary $1$-balanced words, $d$-ary words with subword complexity $n\in\N_{>0}\mapsto n+(d-1)$, hypercubic billiard words, and colorings of Sturmian words by Sturmian words, all share this property. All these classes of words can be thought of as generalizations of Sturmian words.

However, we have also shown that the combinatorial structure of the Tribonacci word does not fall within this framework, and a fully non-computer-assisted proof that its $2$-binomial complexity coincides with its subword complexity is still missing. In this direction, even a non-computer-assisted proof that the $k$-binomial complexity of the Tribonacci word coincides with its subword complexity for some $k\geq 3$ would already be of interest.

We recall that, more generally, Lejeune, Rigo and Rosenfeld conjectured that the $2$-binomial complexity of any Arnoux--Rauzy word is equal to its subword complexity. Some modest numerical experiments carried out by the author of this paper agreed with this conjecture.

\begin{proposition}
Up to length $n=99$, each factor of an Arnoux--Rauzy word generated by a periodic directive sequence with period at most $5$ is alone in its $2$-binomial class (for a definition of directive sequence, see for instance the proposition in \cite[Section~2]{AR91}).
\end{proposition}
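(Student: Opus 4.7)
The plan is purely computational and amounts to an exhaustive finite case enumeration. First, I would list all primitively periodic directive sequences $(a_n)_{n\geq 1}\in\{1,2,3\}^{\N}$ of primitive period $p\leq 5$ whose period block contains each of the three letters at least once (otherwise the limit word would not be a genuine Arnoux-Rauzy word on three letters). Up to cyclic rotation of the period block and to the six permutations of the three-letter alphabet -- both of which preserve the property $b_w^2=p_w$ -- this yields a short, explicit list of canonical representatives to be processed.

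Next, for each representative directive sequence, I would iteratively generate a prefix of the associated Arnoux-Rauzy word $w$ via the standard $S$-adic construction based on the Arnoux-Rauzy substitutions, enlarging the prefix until, for every $n\in\{1,\ldots,99\}$, it already contains the $2n+1$ distinct length-$n$ factors predicted by the subword complexity $p_w(n)=2n+1$ of a three-letter Arnoux-Rauzy word. At that point, every length-$n$ factor of $w$ with $n\leq 99$ has been captured and no further extension of the prefix is needed.

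Then, for every $n\in\{1,\ldots,99\}$, I would slide a window of length $n$ across this prefix and compute, for each factor $u$ encountered, its $2$-binomial signature
\[
    \left(\binom{u}{a}\right)_{a\in\{1,2,3\}} \quad\text{together with}\quad \left(\binom{u}{xy}\right)_{x,y\in\{1,2,3\}},
\]
and then check that no two distinct factors produce the same signature. These signatures can be updated in constant time as the window slides by one position, since the update depends only on the letter leaving the window, the letter entering it, and the current Parikh vector; the whole verification therefore runs in time $O(N\cdot 99)$ per directive sequence, where $N$ is the length of the generated prefix.

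The main obstacle is organisational rather than mathematical: one must enumerate the period-$\leq 5$ directive sequences without omission or spurious duplication, guarantee that each generated prefix is genuinely long enough to realise all $2\cdot 99+1=199$ factors of length $99$ (which can be ensured by monitoring the count of newly discovered length-$n$ factors against the target $2n+1$), and keep the bookkeeping of $2$-binomial signatures efficient enough to remain practical across all cases. Once these engineering aspects are handled, the proposition reduces to a completely unambiguous finite check whose output directly establishes the claim.
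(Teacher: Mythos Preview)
Your proposal is correct and is essentially the same approach as the paper's: the proposition is stated there as the outcome of ``modest numerical experiments'' with no further details, so a finite exhaustive check of the $2$-binomial signatures of all length-$\leq 99$ factors, for each periodic directive sequence of period $\leq 5$, is exactly what is intended. Your reductions by cyclic rotation of the period block and by alphabet permutation are legitimate since Arnoux--Rauzy words are minimal (so rotating the directive sequence preserves the language) and the property $b_w^2=p_w$ is invariant under letter relabeling; your stopping criterion based on the known complexity $p_w(n)=2n+1$ is also sound.
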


Since Arnoux--Rauzy words are themselves a generalization of Sturmian words, this raises the following open questions.

\begin{question}
Is there a natural class of words that can be seen as a \emph{genuine generalization of Sturmian words}, for which the $2$-binomial complexity of at least one (resp. all) of its elements differs from its subword complexity?
\end{question}

For example, Cassaigne--Selmer words (also known as $C$-adic words, \cite{CLL17,CLL22}) are also a fair arithmetic generalization of Sturmian words, and some modest numerical experiments carried out by the author of this paper suggest that their $2$-binomial complexity also coincides with their subword complexity.

\begin{proposition}
Up to length $n=99$, each factor of a Cassaigne--Selmer word generated by a periodic directive sequence with period at most $5$ is alone in its $2$-binomial class.
\end{proposition}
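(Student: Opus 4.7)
The plan is to cast the proposition as a terminating finite computation. First, since the Cassaigne--Selmer substitutions form a finite set, the periodic directive sequences of period at most $5$ form a finite list; for each such sequence $(\sigma_n)_{n\in\N}$, there is a single associated $C$-adic word $w$ to inspect, so the verification reduces to finitely many words. For a fixed directive sequence, I would iteratively compute prefixes of $w$ of the form $\sigma_0\sigma_1\cdots\sigma_N(a)$ for an appropriate seed letter $a$, increasing $N$ until the prefix length exceeds $99+p_w(99)$; this ensures that every length-$n$ factor of $w$, for $n\leq 99$, actually occurs inside the prefix, so that scanning the prefix exhausts the set $\lan_n(w)$. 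The known quadratic upper bound on the subword complexity of $C$-adic words provides an \emph{a priori} estimate of how large $N$ must be, while an \emph{a posteriori} check (namely, that extending the prefix by one further iteration of the composed morphism does not introduce any new length-$99$ factor) serves as a safeguard.

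Second, for each length $n\in\{1,\ldots,99\}$, I would compile the list of length-$n$ factors of the prefix and compute, for every such factor $u$, its $2$-binomial signature, defined as the tuple consisting of all Parikh counts $|u|_a$ for $a\in\A$ together with all binomial coefficients $\binom{u}{ab}$ for pairs $a,b\in\A$. By the very definition of the relation $\sim_2$, two factors of the same length are $2$-binomially equivalent precisely when their signatures coincide. It therefore suffices to sort (or hash) the list of signatures and verify that no two distinct factors of the same length share the same signature; a single collision would falsify the statement, while the absence of any collision confirms it for that length and that directive sequence.

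The main obstacle is purely computational rather than conceptual: for each admissible directive sequence the number of length-$n$ factors is of order $n^2$, so the total workload scales roughly as the product of the number of directive sequences (finite but not small) and $\sum_{n=1}^{99} n^2$. Some care must be taken to manage the arithmetic precision of the binomial coefficients $\binom{u}{ab}$, which can become large, but all quantities remain polynomial-sized in $99$, so exact integer arithmetic is entirely adequate. Assuming the resulting enumeration completes without exhibiting a collision in any of the signature lists, the proposition is established; beyond organising this computation, no mathematical subtlety is involved.
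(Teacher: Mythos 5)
Your overall plan --- reduce to the finitely many periodic directive sequences of period at most $5$, generate a sufficiently long prefix of each associated word, list all factors of each length $n\leq 99$, and compare $2$-binomial signatures (the Parikh vector together with all $\binom{u}{ab}$) --- is exactly the finite verification that the paper reports; the paper offers no proof of this proposition beyond the computation itself, so in spirit your approach is the same. One small point: among the periodic directive sequences you must discard the non-admissible ones (in particular the constant sequences, under which the Cassaigne--Selmer substitutions do not generate a genuine ternary minimal word); this is implicit in the statement but should be explicit in your enumeration.

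The genuine gap is your completeness criterion for the factor list. Cassaigne--Selmer words have subword complexity $2n+1$ (linear, not quadratic as you assert), so your stopping rule ``prefix length exceeds $99+p_w(99)$'' asks for a prefix of length about $298$; such a prefix contains only $200$ windows of length $99$, and there is no reason for these to exhaust the $199$ length-$99$ factors of the infinite word. In general a prefix of length $n+p_w(n)$ merely has \emph{room} for $p_w(n)$ distinct length-$n$ windows; whether every factor actually occurs there is governed by the recurrence function $R(n)$, which you have not bounded, and the ``one further iteration adds no new factor'' check is a heuristic rather than a stabilization proof. This matters because an incomplete factor list can make the verification falsely succeed: the missed factor could be precisely the one that is $2$-binomially equivalent to a listed one. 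The standard fix is substitution-theoretic: let $\tau:=\sigma_0\cdots\sigma_{p-1}$ be the composed morphism over one period (primitive for admissible sequences), compute $\lan_2(w)$ by iterating the induced map on length-$2$ factors until it stabilizes, and use the fact that $\lan_n(w)=\bigcup_{ab\in\lan_2(w)}\lan_n\big(\tau^k(ab)\big)$ as soon as $|\tau^k(c)|\geq n$ for every letter $c$. With that replacement, the rest of your procedure (signature comparison, exact integer arithmetic) is correct and matches the computation behind the proposition.
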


Since both Arnoux--Rauzy words and Cassaigne--Selmer words belong to the broader class of dendric words \cite{BDDLPRR15,BDDLP18}, it would also be interesting to investigate whether there exists a subclass of dendric words for which none of its elements satisfies this property.

\begin{question}
In the opposite direction, is there a class of words, \emph{genuinely unrelated to Sturmian words}, such that the $2$-binomial complexity, but not the $1$-binomial complexity, of at least one (resp. all) of its elements coincides with its subword complexity?
\end{question}

\medskip

Our last comment concerns the binary case. Proposition~\ref{prop:binary} states that the $2$-binomial complexity of any binary $1$-balanced word coincides with its subword complexity. Moreover, we saw that the converse does not hold. Indeed, words of the form $1^m 2^\omega$ (with $m\geq 2$) are $m$-balanced but not $(m-1)$-balanced and their $2$-binomial complexity coincides with their subword complexity. However, the $1$-binomial complexity of these words also coincides with their subword complexity, and it would be interesting to determine whether other counterexamples exist.

\begin{question}
Does the converse of Proposition~\ref{prop:binary} hold when restricted to words for which their $1$-binomial complexity does not coincide with their subword complexity?
\end{question}

%
%

\paragraph{Acknowledgments} The author would like to thank M\'elodie Andrieu for having introduced him to Combinatorics on Words and for her enthusiasm for this work.

\small
\bibliography{biblio_kbin}
\bibliographystyle{alpha}

\end{document}